\documentclass[a4paper]{amsart}
\usepackage[leqno]{amsmath}
\usepackage{amssymb}
\usepackage{amscd}
\usepackage{amsthm}
\usepackage{mathrsfs}
\usepackage{mathtools}
\usepackage{bbm}
\usepackage{color}
\usepackage{enumerate}
\usepackage{cite}
\usepackage[utf8]{inputenc}
\usepackage[all,cmtip]{xy}
\usepackage{etoolbox}
\usepackage{tikz-cd}
%\usetikzlibrary{arrows}
\usepackage{extarrows}

\numberwithin{equation}{section}

							%K"orper, Ringe, Ideale, Idele, etc.
\newcommand{\Z}{\ensuremath{\mathbb{Z}}}
\newcommand{\Q}{\ensuremath{\mathbb{Q}}}

\newcommand{\C}{\ensuremath{\mathbb{C}}}

\newcommand{\A}{\ensuremath{\mathbb{A}}}
\newcommand{\T}{\ensuremath{\mathbb{T}}}

\DeclareMathOperator{\Hom}{Hom}
\DeclareMathOperator{\End}{End}
\DeclareMathOperator{\Ext}{Ext}

\DeclareMathOperator{\id}{id}

\DeclareMathOperator{\PGL}{\mathbb{PGL}}

\DeclareMathOperator{\ord}{ord}
\DeclareMathOperator{\Ind}{Ind}

\DeclareMathOperator{\St}{St}
\DeclareMathOperator{\cind}{c-ind}

\DeclareMathOperator{\HH}{H}
\DeclareMathOperator{\MS}{\mathcal{M}}
\renewcommand{\det}{\operatorname{det}}

\newcommand{\PP}{\ensuremath{\mathbb{P}^1}}					
\newcommand{\sinfty}{\ensuremath{^{S,\infty}}}

\newcommand{\Ah}{\ensuremath{\mathcal{A}}}

\DeclareMathOperator{\Div}{Div}
\DeclareMathOperator{\cont}{cont}
\newcommand{\n}{\ensuremath{\mathfrak{n}}}
\newcommand{\m}{\ensuremath{\mathfrak{m}}}
\newcommand{\p}{\ensuremath{\mathfrak{p}}}
\newcommand{\q}{\ensuremath{\mathfrak{q}}}

\newcommand{\LI}{\mathcal{L}}
\newcommand{\No}{\ensuremath{N}}

\DeclareMathOperator{\an}{an}
\DeclareMathOperator{\pr}{pr}
\newcommand{\Hec}{\mathscr{H}}
\DeclareMathOperator{\ev}{ev}
\DeclareMathOperator{\cores}{cores}
\DeclareMathOperator{\res}{res}

\newcommand{\into}{\hookrightarrow}

\newcommand{\too}{\longrightarrow}								%Arrows
\newcommand{\mapstoo}{\longmapsto}

\newtheorem{Lem}{Lemma}[section]
\makeatletter
\newlength{\@thlabel@width}%
\newcommand{\thmenumhspace}{\settowidth{\@thlabel@width}{\itshape1.}\sbox{\@labels}{\unhbox\@labels\hspace{\dimexpr-\leftmargin+\labelsep+\@thlabel@width-\itemindent}}}
\makeatother
\newtheorem{Pro}[Lem]{Proposition}
\newtheorem{MLem}[Lem]{Main Lemma}
\newtheorem{Thm}[Lem]{Theorem}

\newtheorem{Def}[Lem]{Definition}
\newtheorem{Rem}[Lem]{Remark}

\newtheorem*{theConj}{Conjecture}
\newtheorem{Cor}[Lem]{Corollary}

\author[L. Gehrmann]{Lennart Gehrmann}
\address{L. Gehrmann \\ Fakult\"at f\"ur Mathematik \\ Universit\"at Duisburg-Essen \\ Thea-Leymann-Stra\ss e 9 \\ 45127 Essen \\ Germany}
\email{lennart.gehrmann@uni-due.de}

\title[Derived Hecke algebra and automorphic $\LI$-invariants]{Derived Hecke algebra and automorphic L-invariants}
\subjclass[2010]{Primary 11F41; Secondary 11F67, 11F75, 11F85}

\setcounter{tocdepth}{1}

\begin{document}

\begin{abstract}
Let $\pi$ be a cohomological cuspidal automorphic representation of $PGL_2$ over a number field of arbitrary signature.
Under the assumption that the local component of $\pi$ at a prime $\p$ is the Steinberg representation the automorphic $\LI$-invariant of $\pi$ at $\p$ has been defined using the lowest degree cohomology in which the system of Hecke eigenvalues associated to $\pi$ occurs.

In this article we define automorphic $\LI$-invariants for each cohomological degree and show that they behave well with respect to the action of Venkatesh's derived Hecke algebra.
As a corollary we show that these $\LI$-invariants are (essentially) the same if the following conjecture of Venkatesh holds: the $\pi$-isotypic component of the cohomology is generated by the minimal degree cohomology as a module over the $p$-adic derived Hecke algebra. 
\end{abstract}

\maketitle

\tableofcontents

\section{Introduction}
The system of Hecke eigenvalues associated to a Hilbert modular newform of parallel weight $(2,\ldots,2)$ only shows up in the middle degree cohomology of the corresponding Hilbert modular variety.
On the contrary, if one considers an automorphic form $f$ of parallel weight $(2,\ldots,2)$ of the group $PGL_2$ over a number field $F$ which is not totally real, its system of Hecke eigenvalues show up in several degrees.
For example, assume that $F$ is an imaginary quadratic field with class number one and that the field of definition of $f$ is $\Q$.
Let $\mathfrak{n}\subseteq \mathcal{O}_F$ be the level of $f$ and let $\Gamma_0(\mathfrak{n})\subseteq PSL_2(\mathcal{O}_F)$ be the subgroup of all matrices which are congruent to an upper triangular matrix modulo $\mathfrak{n}$.
Then, we have
$$\dim \HH^{1}(\Gamma_0(\mathfrak{n}),\Q)[f]= \HH^{2}(\Gamma_0(\mathfrak{n}),\Q)[f] = 1,$$
where $[f]$ denotes the $f$-isotypic component.

In \cite{Ve} Venkatesh constructs a graded extension $\tilde{\mathbb{T}}=\oplus_{i\geq 0}\tilde{\mathbb{T}}_i$ - called the derived Hecke algebra - of the usual Hecke algebra, which acts in a graded fashion on cohomology with $p$-adic coefficients.
The classical Hecke algebra lies in the centre of $\tilde{\mathbb{T}}$ and therefore, the derived Hecke algebra maps the $f$-isotypic component of the cohomology to itself.
This action should be rich enough to account for the occurrence of the same Hecke eigenvalues in several degrees, i.e.~ the $f$-isotypic part of the cohomology should be generated by its lowest degree as a $\tilde{\mathbb{T}}$-module.
In our example, this simply means that there should exist a derived Hecke operator $t\in \tilde{\mathbb{T}}_1$ such that the corresponding map
\begin{align}\label{dream}\HH^{1}(\Gamma_0(\mathfrak{n}),\Q_p)[f]\xlongrightarrow{t} \HH^{2}(\Gamma_0(\mathfrak{n}),\Q_p)[f]\end{align}
is non-zero.
As a consequence, the different cohomology groups attached to $f$ should carry the same information, e.g.~the $p$-adic periods one can define using these different cohomology groups should be the same.

The kind of period we study in this article are the so-called automorphic $\LI$-invariants. Suppose that the automorphic form $f$ is Steinberg at a prime $\p$ over $p$, i.e.~$\p$ divides the level $\mathfrak{n}$ exactly once and the $U_\p$-eigenvalue of $f$ is $1$.
Then, using the lowest degree cohomology one can attach an automorphic $\LI$-invariant to $f$ and $\p$, which shows up in the exceptional zero formula of the $p$-adic $L$-function attached to $f$ (see \cite{Ge2} for details.)
The study of automorphic $\LI$-invariants was initiated by Darmon in \cite{D}, where he considered elliptic modular forms of weight $2$.
The construction was generalized to various settings, for example, to elliptic modular forms of higher weight by Orton (cf.~\cite{Orton}), to Hilbert modular forms of parallel weight $(2,\ldots,2)$ by Spie\ss~(cf.~\cite{Sp}), and by Barrera and Williams to Bianchi modular forms of arbitrary weight (cf.~\cite{BWi}).

Let us recall the construction of the automorphic $\LI$-invariant in our example:
let $\St_\p$ be the space of all integer-valued locally constant functions on $\PP(F_\p)$ modulo constant functions.
This is canonically a $PGL_2(F_\p)$-module.
Let $\Gamma_0^{(\p)}(\mathfrak{n})\subseteq PGL_2(\mathcal{O}_F[\p^{-1}])$ be the $\p$-arithmetic group given by the same congruence conditions as its arithmetic counterpart.
Then, the fact that $f$ is Steinberg at $\p$ implies that evaluation at an Iwahori-fixed vector induces an isomorphism
$$\HH^{1}(\Gamma_0^{(\p)}(\mathfrak{n}),\Hom(\St_\p,\Q_p))[f]\xlongrightarrow{\ev}\HH^{1}(\Gamma_0(\mathfrak{n}),\Q_p)[f]$$
of one-dimensional vector spaces.
$P$-adic integration together with Breuil's construction of extensions of the Steinberg representation yields for every continuous homomorphism $\ell\colon F_\p^{\ast}\to \Q_p$ a map
$$\HH^{1}(\Gamma_0^{(\p)}(\mathfrak{n}),\Hom(\St_\p,\Q_p))[f]\xlongrightarrow{c_{\ell}}\HH^{2}(\Gamma_0^{(\p)}(\mathfrak{n}),\Q_p)[f],$$
which turns out to be an isomorphism if $\ell=\ord_\p$ is the $p$-adic valuation.
Then, the automorphic $\LI$-invariant $\LI_{\ell}(f,\p)\in\Q_p$ is defined as the unique $p$-adic number such that
$$c_{\ell}= \LI_{\ell}(f,\p) \cdot c_{\ord_{\p}}.$$
Instead of working with the first cohomology group we could work with the second one and get an a priori different $\LI$-invariant.
But let us suppose we have an action of the derived Hecke algebra $\tilde{\mathbb{T}}$ on $\HH^{\ast}(\Gamma_0(\mathfrak{n}),\Hom(\St_\p,\Q_p))[f]$ and $\HH^{\ast}(\Gamma_0(\mathfrak{n}),\Q_p)[f]$ such that for all $t\in \tilde{\mathbb{T}}_1$ and all homomorphism $\ell\colon F_\p^{\ast}\to \Q_\p$ the diagrams
\begin{center}
\begin{tikzpicture}
    \path 	(0,0) 	node[name=A]{$\HH^{1}(\Gamma_0^{(\p)}(\mathfrak{n}),\Hom(\St_\p,\Q_p))[f]$}
		(6,0) 	node[name=B]{$\HH^{2}(\Gamma_0^{(\p)}(\mathfrak{n}),\Hom(\St_\p,\Q_p))[f]$}
		(0,-1.5) 	node[name=C]{$\HH^{1}(\Gamma_0(\mathfrak{n}),\Q_p)[f]$}
		(6,-1.5) 	node[name=D]{$\HH^{2}(\Gamma_0(\mathfrak{n}),\Q_p)[f]$};
    \draw[->] (A) -- (C) node[midway, left]{$\ev$};
    \draw[->] (A) -- (B) node[midway, above]{$t$};
    \draw[->] (C) -- (D) node[midway, above]{$t$};
    \draw[->] (B) -- (D) node[midway, right]{$\ev$};
\end{tikzpicture} 
\end{center}
and
\begin{center}
\begin{tikzpicture}
    \path 	(0,0) 	node[name=A]{$\HH^{1}(\Gamma_0^{(\p)}(\mathfrak{n}),\Hom(\St_\p,\Q_p))[f]$}
		(6,0) 	node[name=B]{$\HH^{2}(\Gamma_0^{(\p)}(\mathfrak{n}),\Hom(\St_\p,\Q_p))[f]$}
		(0,-1.5) 	node[name=C]{$\HH^{2}(\Gamma_0^{(\p)}(\mathfrak{n}),\Q_p)[f]$}
		(6,-1.5) 	node[name=D]{$\HH^{3}(\Gamma_0^{(\p)}(\mathfrak{n}),\Q_p)[f]$};
    \draw[->] (A) -- (C) node[midway, left]{$c_{\ell}$};
    \draw[->] (A) -- (B) node[midway, above]{$t$};
    \draw[->] (C) -- (D) node[midway, above]{$t$};
    \draw[->] (B) -- (D) node[midway, right]{$c_{\ell}$};
\end{tikzpicture} 
\end{center}
commute (up to a sign).
Then, the existence of an element $t\in\tilde{\mathbb{T}}_1$ as in \eqref{dream} would easily imply that the two $\LI$-invariants are equal.
The main aim of this note is to construct such actions of the derived Hecke algebra and prove that the corresponding diagrams for arbitrary number fields commute.

The paper is structured as follows:
the first part is devoted to the construction of automorphic $\LI$-invariants in arbitrary cohomological degree.
Thankfully, the construction of Spie\ss~can be easily generalized to this setup.
In Section \ref{Steinberg} we recall the construction of extensions of the Steinberg representation due to Breuil.
After that we define and give basic properties of the spaces of modular symbols, which are needed to define automorphic $\LI$-invariants.
A new feature over general number fields, which does not occur when dealing with Hilbert or Bianchi modular forms, is that those spaces of modular symbols are not necessarily one-dimensional anymore.
Thus, one is forced to define automorphic $\LI$-invariants as determinants (see Definition \ref{defi}).
We show that it does not matter whether we use cohomology with or without compact support in defining automorphic $\LI$-invariants (see Proposition \ref{compact}).
Finally, we state our conjecture that automorphic $\LI$-invariants coming from different cohomology degrees are essentially the same.

In the second part we review Venkatesh's derived Hecke algebra $\tilde{\mathbb{T}}$.
We give a wish list of properties we need for the derived Hecke algebra in order to prove our conjecture in Section \ref{algebra}.
Subsequently, we show that $\tilde{\mathbb{T}}$ fulfills most of this properties, e.g.~we show that it acts on all cohomology groups in question and that generalizations of the diagrams above commute.
Thus, we deduce that our conjecture follows if the $f$-isotypic part of the cohomology of the appropriate locally symmetric space is generated by its lowest degree as a $\tilde{\mathbb{T}}$-module (see Theorem \ref{dastheorem}).

\bigskip
\textbf{General notations.} Throughout the article we fix a prime $p$.
All rings are assumed to be commutative and unital.
The group of invertible elements of a ring $R$ will be denoted by $R^{\ast}$.

If $R$ is a ring and $G$ is a group, we write $R[G]$ for the group algebra of $G$ with coefficients in $R$.
Given a group $G$ and a group homomorphism $\epsilon\colon G\to R^{\ast}$ we let $R(\epsilon)$ be the $R[G]$-module which underlying $R$-module is $R$ itself and on which $G$ acts via the character $\epsilon$.
If $M$ is another $R[G]$-module, we put $M(\epsilon)=M\otimes_{R}R(\epsilon)$.

If $X$ and $Y$ are topological spaces, we write $C(X,Y)$ for the space of continuous maps from $X$ to $Y$.
%For a topological ring $R$ we define $C_{c}(X,R) \subseteq C(X,R)$ to be the subset of continuous functions with compact support.
%If we consider $Y$ (resp.~$R$) with the discrete topology, we will often write $C^{0}(X,Y)$ (resp.~$C_{c}^{0}(X,R)$) instead.
%Given a a $p$-adic manifold $M$ and a finite extension $E$ of $\Q_p$ we denote by $C^{\an}(M,E)\subseteq C(M,E)$ the space of locally analytic functions. Similarly, if $G$ is a $p$-adic Lie group, we write $\Hom^{\an}(G,E)$ for the space of locally analytic homomorphisms.
 
\bigskip
\textbf{Acknowledgments.} The ideas presented in this article emerged during a stay at the Bernoulli Center (CIB) in the course of the semester-long program on Euler systems and Special Values of L-functions.
It is my pleasure to thank the organizers of the program as well as the local staff for a pleasant and scientifically stimulating stay.
I thank Sebastian Bartling for comments on an earlier draft of the article and Felix Bergunde for several helpful discussions.

\section{The setup}
We fix an algebraic number field $F$ of degree $d=r+2s$, where $r$ (resp.~$s$) is the number of real (resp.~complex) places of $F$.
The ring of integers of $F$ will be denoted by $\mathcal{O}$.
We denote the set of infinite places of $F$ by $S_{\infty}$.

If $v$ is a place of $F$, we denote by $F_{v}$ the completion of $F$ at $v$.
If $\mathfrak{q}$ is a finite place, we let $\mathcal{O}_{\mathfrak{q}}$ denote the valuation ring of $F_{\mathfrak{q}}$ and $\ord_{\mathfrak{q}}$ the additive valuation such that $\ord_{\mathfrak{q}}(\varpi)=1$ for any local uniformizer $\varpi\in\mathcal{O}_{\mathfrak{q}}$.
We write $\mathcal{N}(\mathfrak{q})$ for the cardinality of the residue field $\mathcal{O}/\mathfrak{q}$.

For a finite set $S$ of places of $F$ we define the "$S$-truncated adeles" $\A^{S}$ as the restricted product of all completions $F_{v}$ with $v\notin S$.
In case $S$ is the empty set we drop the superscript $S$ from the notation.
We will often write $\A\sinfty$ instead of $\A^{S\cup S_\infty}$.

We consider the algebraic group $G=PGL_2$.
If $v$ is a place of $F$, we write $G_v=G(F_v)$ and we put $G_\infty=\prod_{v\mid\infty}G_v$.
If $\q$ is a finite place, we write $K_\q=G(\mathcal{O}_\q)$.

Further, we fix a cuspidal automorphic representation $\pi=\otimes_v \pi_v$ of $G(\A)$, which is cohomological with respect to the trivial coefficient system.
For simplicity, we assume that the field of definition of the finite part of $\pi$ is the field of rationals.
We denote the conductor of $\pi$ by $\mathfrak{f}_\pi$.

At last, we fix a prime $\p$ of $F$ lying above $p$ and we assume that the local representation $\pi_\p$ is the Steinberg representation $\St_\p(\C)$.
Hence, the prime $\p$ divides the conductor $\mathfrak{f}_\pi$ exactly once.

\begin{Rem}
One could also allow non-trivial central characters by working with the group $GL_2$ instead of $PGL_2$.
More generally, one could consider automorphic representations of inner forms of $GL_2$, which are split at $\p$.
To keep the notation as simple as possible we stick with the special case.
But all arguments carry over easily to the more general situation.
\end{Rem}

\section{Automorphic L-invariants}
\subsection{Extension classes}\label{Steinberg}
We recall a variant of Spie\ss' construction of extensions of the continuous Steinberg representation (cf.~Section 3.7 of \cite{Sp}).
A special case of these extensions were first constructed by Breuil in \cite{Br}.

Let $R$ be a locally pro-finite ring.
In our applications $R$ will be one of the following rings: $\Z,\ \Z/p^n,\ \Z_p,\ \Q$ or $\Q_p$.
The $R$-valued continuous Steinberg representation $\St_\p^{\cont}(R)$ of $G_\p$ is given by the set of continuous $R$-valued functions on $\PP(F_\p)$ modulo constant functions.
For any discrete ring $R$ the canonical map
$$\St_\p^{\cont}(\Z)\otimes R\too \St_\p^{\cont}(R)$$
is an isomorphism. In this case we simply put $\St_\p(R)=\St_\p^{\cont}(R).$
If $\Omega$ is a field with the discrete topology, then $\St_\p(\Omega)$ is an irreducible smooth representation of $G_\p$.

For a continuous group homomorphism $\ell\colon F^\ast\to R$ we define $\widetilde{\mathcal{E}}(\ell)$ as the set of pairs $(\Phi,r)\in C(GL_2(F_\p),R)\times R$ with
$$
\Phi\left(g\cdot\begin{pmatrix}
t_1 & u \\ 0 & t_2
\end{pmatrix}\right)
=\Phi(g) + r\cdot \ell(t_1)
$$
for all $t_1,t_2\in F_\p^\ast$, $u\in F$ and $g\in G_\p$.
The group $GL_2(F_{\p})$ acts on $\widetilde{\mathcal{E}}(\ell)$ via $g.(\Phi(-),r)=(\Phi(g^{-1}\cdot-),r)$.
The subspace $\widetilde{\mathcal{E}}(\ell)_0$ of tuples of the form $(\Phi,0)$ with constant $\Phi$ is invariant.
We get an induced $G_\p$-action on the quotient $\mathcal{E}(\ell)=\widetilde{\mathcal{E}}(\ell) /\widetilde{\mathcal{E}}(\ell)_0$.

\begin{Lem}
Let $\pr\colon G_\p\to \PP(F_\p),\ g\mapsto g.\infty$, be the canonical projection.
The following sequence of $R[G_\p]$-modules is exact:
\begin{align}\label{extseq}
0\too\St_\p^{\cont}(R)\xlongrightarrow{(\pr^{\ast},0)}\mathcal{E}(\ell)\xlongrightarrow{(0,\id_R)}R\too 0
\end{align}
We write $b_{\ell}$ for the associated cohomology class in $\HH^1(G_\p,\St^{\cont}_\p(R))$.
\end{Lem}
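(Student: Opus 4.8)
The statement asks for exactness of the three-term sequence \eqref{extseq}. The plan is to check exactness spot by spot, using the explicit description of $\mathcal{E}(\ell)$ as $\widetilde{\mathcal{E}}(\ell)/\widetilde{\mathcal{E}}(\ell)_0$. First I would record that both maps are $G_\p$-equivariant: the map $(0,\id_R)$ sends the class of $(\Phi,r)$ to $r$ and is visibly equivariant since $G_\p$ acts trivially on the second coordinate; the map $(\pr^\ast,0)$ sends (the class of) a continuous function $\bar\varphi$ on $\PP(F_\p)$ to the class of $(\varphi\circ\pr,0)$, and one checks $\varphi\circ\pr$ indeed satisfies the defining transformation law with $r=0$ because $\pr$ is invariant under right multiplication by the upper triangular Borel, so this lands in $\widetilde{\mathcal{E}}(\ell)$; equivariance is immediate from $\pr(g^{-1}\cdot-) = g^{-1}.\pr(-)$. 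I should also note it is well-defined on $\St_\p^{\cont}(R)$, i.e.\ constant $\varphi$ pull back into $\widetilde{\mathcal{E}}(\ell)_0$.

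\textbf{Surjectivity on the right and the key computation.} Surjectivity of $(0,\id_R)$: given $r\in R$, I need a continuous $\Phi$ on $GL_2(F_\p)$ with $\Phi(g\cdot\left(\begin{smallmatrix}t_1&u\\0&t_2\end{smallmatrix}\right)) = \Phi(g) + r\,\ell(t_1)$. This is the one genuinely substantive point: one must produce a continuous section of $GL_2(F_\p)\to \PP(F_\p)$ away from the bad locus, or more precisely a continuous function on $GL_2(F_\p)$ transforming by the cocycle $g,b\mapsto r\,\ell(t_1(b))$ for the Borel $B$. Concretely, using the Iwasawa/Bruhat decomposition $GL_2(F_\p) = GL_2(\mathcal{O}_\p)\cdot B(F_\p) \cup (\text{lower triangular})\cdot B(F_\p)$ one builds $\Phi$ by hand: on the big cell write $g = k\left(\begin{smallmatrix}t_1&u\\0&t_2\end{smallmatrix}\right)$ with $k\in GL_2(\mathcal{O}_\p)$ and set $\Phi(g) = r\,\ell(t_1)$ — this is forced on $GL_2(\mathcal{O}_\p)\cap B$ to be $r\,\ell(\text{unit})$, and since $\ell$ is a continuous homomorphism $F^\ast\to R$ extended to $F_\p^\ast$ (or rather $\ell$ continuous on $F_\p^\ast$), $\ell$ restricted to $\mathcal{O}_\p^\ast$ is continuous, making $\Phi$ well-defined and continuous on the big cell; one patches on the small cell similarly. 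I expect this local-sections argument — showing the resulting $\Phi$ is genuinely continuous on all of $GL_2(F_\p)$, not just locally — to be the main obstacle, and it is exactly the content that traces back to Breuil's and Spie\ss' constructions, so I would cite Section 3.7 of \cite{Sp} for the detailed verification rather than redo it.

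\textbf{Injectivity on the left and exactness in the middle.} Injectivity of $(\pr^\ast,0)$: if $\varphi\circ\pr$ is constant (i.e.\ lies in $\widetilde{\mathcal{E}}(\ell)_0$) then $\varphi$ is constant on the image of $\pr$, which is all of $\PP(F_\p)$, hence $\bar\varphi = 0$ in $\St_\p^{\cont}(R)$. Exactness at $\mathcal{E}(\ell)$: clearly $(0,\id_R)\circ(\pr^\ast,0) = 0$. Conversely, if the class of $(\Phi,r)$ maps to $0$ then $r = 0$, so $\Phi$ satisfies $\Phi(g\cdot\left(\begin{smallmatrix}t_1&u\\0&t_2\end{smallmatrix}\right)) = \Phi(g)$, meaning $\Phi$ is constant on the fibres of $\pr\colon G_\p\to\PP(F_\p)$ and descends to a continuous function $\varphi$ on $\PP(F_\p)$ with $\Phi = \varphi\circ\pr$; thus $(\Phi,0) = (\pr^\ast,0)(\bar\varphi)$, and the class of $(\Phi,0)$ is in the image. (Here continuity of $\varphi$ follows because $\pr$ is a continuous open surjection.) Finally I would observe that \eqref{extseq} being a short exact sequence of $R[G_\p]$-modules defines, by pushing the class of the extension along, a class $b_\ell\in \HH^1(G_\p,\St_\p^{\cont}(R))$ — with whatever continuity conventions on cohomology are in force — which justifies the last sentence of the statement.
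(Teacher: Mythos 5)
Your reduction to the surjectivity of $(0,\id_R)$ is correct, and that is indeed where the paper concentrates its effort: the other exactness claims are routine and you handle them correctly. The gap is at the one genuinely nontrivial point.

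Your sketch for producing the preimage of $1\in R$ does not work as written. Setting $\Phi(g)=r\,\ell(t_1)$ after choosing an Iwasawa decomposition $g=k\left(\begin{smallmatrix}t_1&u\\0&t_2\end{smallmatrix}\right)$, $k\in GL_2(\mathcal{O}_\p)$, is not well-defined: if $g=kb=k'b'$ then $b'=ub$ for some $u\in GL_2(\mathcal{O}_\p)\cap B$, and $\ell(t_1(b'))-\ell(t_1(b))=\ell(t_1(u))\in\ell(\mathcal{O}_\p^\ast)$, which is not zero for general $\ell$ (the Lemma is stated for an arbitrary continuous homomorphism $\ell\colon F_\p^\ast\to R$, so no triviality on units is available). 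Invoking continuity of $\ell$ on $\mathcal{O}_\p^\ast$ does not repair well-definedness; what is actually needed is a specific choice of $\Phi$ that satisfies the cocycle relation globally. The paper supplies exactly this: it exhibits the explicit function
\begin{equation*}
\Phi_0\!\left(\begin{pmatrix}a & b \\ c & d\end{pmatrix}\right)=
\begin{cases}
\ell(a) & \text{if } \ord(a)<\ord(c),\\
\ell(c) & \text{if } \ord(a)\geq\ord(c),
\end{cases}
\end{equation*}
and checks directly that $(\Phi_0,1)\in\widetilde{\mathcal{E}}(\ell)$. (Multiplying on the right by $\left(\begin{smallmatrix}t_1&u\\0&t_2\end{smallmatrix}\right)$ scales the first column by $t_1$, leaving the case distinction unchanged and adding $\ell(t_1)$; continuity holds because the two cases are clopen and $\ell$ is continuous.) So while your overall reduction and the deferral to Section 3.7 of \cite{Sp} identify the right source, the concrete construction you propose is flawed, and the proof as written is incomplete precisely at the step the paper singles out as the only nontrivial one.
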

\begin{proof}
The only non-trivial step is to show that the map $\mathcal{E}(l)\xrightarrow{(0,\id_R)}R$ is surjective.
A direct calculation shows that $(\Phi_0,1)$ with
$$\Phi_0\left(\begin{pmatrix}
	a & b \\ c & d
\end{pmatrix}\right)= \begin{cases}
l(a) & \mbox{if}\ \ord(a) < \ord(c),\\ 
l(c) & \mbox{if}\ \ord(a)\geq\ord(c),
\end{cases}
$$
defines an element of $\widetilde{\mathcal{E}}(l).$ 
\end{proof}
\begin{Rem}
The extension class considered in \cite{Sp}, Lemma 3.11, equals $2\cdot b_{\ell}$.
\end{Rem}

Let $\ord_{\p}\colon F_{\p}^{\ast}\to \Z\subseteq \Q$ be the normalized valuation.
By construction $\mathcal{E}(\ord_\p)$ is a smooth representation with rational coefficients, which is easily seen to be non-split.
It is well known that the space of smooth extension of the trivial representation with the Steinberg representation is one-dimensional (see Chapter X, Proposition 4.7 in \cite{BW}).
Thus, $\mathcal{E}(\ord_\p)$ must be isomorphic to the smooth contragradient of the representation 
$$C(\PP(F_\p),\Q)=\Ind_{B_\p}^{G_\p}\Q,$$
where $B_\p\subseteq G_\p$ denotes the Borel subgroup of upper triangular matrices.
Since the extension is non-split the unique (up to scalar) spherical vector of $\mathcal{E}(\ord_\p)$ is a generator of the representation.
By Frobenius reciprocity we get a surjective map
$$\cind_{K_\p}^{G_\p}\Q\too\mathcal{E}(\ord_\p).$$
Since the space of spherical vectors is one dimensional the standard Hecke operator $T_\p$ must act on it by multiplication by a scalar $\lambda$.
(In fact, one can check explicitly that $\lambda=\mathcal{N}(\p)+1.$)

\begin{Pro}\label{smoothext}
The sequence of $\Q[G_\p]$-modules is exact:
\begin{align*}
0\too\cind_{K_\p}^{G_\p}\Q\xlongrightarrow{T_\p-\lambda}\cind_{K_\p}^{G_\p}\Q\too\mathcal{E}(\ord_\p)\too 0
\end{align*}
\end{Pro}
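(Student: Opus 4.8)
The plan is to reduce the statement to elementary module theory over the spherical Hecke algebra. Write $\mathscr{H}_\p=\End_{\Q[G_\p]}(\cind_{K_\p}^{G_\p}\Q)$; by the Satake isomorphism for $PGL_2$ this is the polynomial ring $\Q[T_\p]$ on the standard Hecke operator, and the map $T_\p-\lambda$ appearing in the statement is the endomorphism $T_\p-\lambda\cdot\id$. For every smooth $\Q[G_\p]$-module $V$, Frobenius reciprocity gives an $\mathscr{H}_\p$-equivariant identification $\Hom_{\Q[G_\p]}(\cind_{K_\p}^{G_\p}\Q,V)=V^{K_\p}$. Applied to the surjection $\phi\colon\cind_{K_\p}^{G_\p}\Q\onto\mathcal{E}(\ord_\p)$ from the discussion preceding the proposition, $\phi$ corresponds to a generator of the one-dimensional space $\mathcal{E}(\ord_\p)^{K_\p}$, on which $T_\p$ acts by $\lambda$; hence $\phi\circ(T_\p-\lambda)=0$, so $\phi$ factors through a surjection $\bar\phi\colon\cind_{K_\p}^{G_\p}\Q/(T_\p-\lambda)\cind_{K_\p}^{G_\p}\Q\onto\mathcal{E}(\ord_\p)$. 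It then suffices to prove (i) $T_\p-\lambda$ is injective on $\cind_{K_\p}^{G_\p}\Q$, and (ii) $\bar\phi$ is an isomorphism.

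For (i) I would pass to the Bruhat--Tits tree of $G_\p$: identifying $\cind_{K_\p}^{G_\p}\Q$ with the finitely supported $\Q$-valued functions on its set of vertices, $T_\p$ becomes the adjacency operator of the $(\mathcal{N}(\p)+1)$-regular tree and $\lambda=\mathcal{N}(\p)+1$. If $(T_\p-\lambda)f=0$ then $f$ is harmonic, and evaluating the mean value property at a vertex where $\lvert f\rvert$ is maximal forces $f$ to be locally constant, hence constant by connectedness, hence $0$ since it has finite support. Equivalently, $\cind_{K_\p}^{G_\p}\Q$ is torsion free over $\Q[T_\p]$.

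For (ii) the crux is that $V:=\cind_{K_\p}^{G_\p}\Q/(T_\p-\lambda)\cind_{K_\p}^{G_\p}\Q$ has length exactly $2$. It is admissible, because each space of invariants $(\cind_{K_\p}^{G_\p}\Q)^{K'}$ under an open compact $K'\subseteq G_\p$ is finitely generated over $\Q[T_\p]$; hence $V$ has finite length. Moreover the Bernstein centre acts on $\cind_{K_\p}^{G_\p}\Q$ through $\mathscr{H}_\p=\Q[T_\p]$, so it acts on $V$ through $\Q[T_\p]/(T_\p-\lambda)=\Q$, and therefore every irreducible subquotient of $V$ is an unramified representation of $G_\p$ with Satake parameter $\lambda$; as $\mathcal{E}(\ord_\p)$ is reducible, the only such representations are the trivial representation $\Q$ and the Steinberg representation $\St_\p(\Q)$. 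Using exactness of $(-)^{K_\p}$ and of $(-)^{I}$ (for $I$ an Iwahori subgroup), the injectivity from (i), and the values $\dim\Q^{K_\p}=\dim\Q^{I}=\dim\St_\p(\Q)^{I}=1$ and $\dim\St_\p(\Q)^{K_\p}=0$, one reads off from $\dim V^{K_\p}=1$ and $\dim V^{I}=2$ --- the latter because $(\cind_{K_\p}^{G_\p}\Q)^{I}$ is free of rank $2$ over $\Q[T_\p]$ --- that $\Q$ and $\St_\p(\Q)$ each occur exactly once among the composition factors of $V$. Thus $V$ has length $2$; since $\mathcal{E}(\ord_\p)$ is an extension of $\Q$ by the irreducible $\St_\p(\Q)$ it also has length $2$, so the surjection $\bar\phi$ between two modules of length $2$ is an isomorphism, which gives the proposition.

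The hard part is (ii): excluding that $V$ is strictly larger than $\mathcal{E}(\ord_\p)$, i.e.~that the quotient acquires spurious composition factors (most plausibly a second copy of the trivial representation, or a constituent of an irreducible unramified principal series). This is precisely where genuine input from the representation theory of $PGL_2(F_\p)$ is used: finiteness of length for admissible representations, the description of the unramified block, and the rank-$2$ freeness of the $I$-invariants over the spherical Hecke algebra. An alternative, if one is content to cite it, is the classical fact that for every scalar $\mu$ the quotient $\cind_{K_\p}^{G_\p}\Q/(T_\p-\mu)\cind_{K_\p}^{G_\p}\Q$ is the unramified principal series with Hecke parameter $\mu$, which yields (i) and (ii) simultaneously.
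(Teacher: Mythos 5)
Your proof is correct, but it takes a genuinely different route from the paper's. The paper dispatches the statement in three lines by citation: injectivity of $T_\p-\lambda$ follows from the fact that $\cind_{K_\p}^{G_\p}\Q$ is free over $\Q[T_\p]$ (Barthel--Liv\'n\'e, Theorem 10 of \cite{BL}), and the cokernel is identified outright with the smooth contragredient of $\Ind_{B_\p}^{G_\p}\C$ by citing Theorem 3.2 of \cite{SK}, which then matches the description of $\mathcal{E}(\ord_\p)$ obtained before the proposition. You replace both citations by self-contained arguments: a maximum-principle argument for harmonic functions on the Bruhat--Tits tree for injectivity (morally this is a proof of the Barthel--Liv\'n\'e freeness you mention as the equivalent reformulation), and a composition-length argument for the cokernel, relying on admissibility, the Bernstein centre pinning down the possible constituents, and the computation of $K_\p$- and Iwahori-invariants (in particular the rank-two freeness of $(\cind_{K_\p}^{G_\p}\Q)^{I}$ over $\Q[T_\p]$). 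This makes explicit the representation-theoretic content the paper leaves implicit in the references and clarifies exactly where reducibility of the principal series at the parameter $\lambda=\mathcal{N}(\p)+1$ enters; the cost is that you invoke a fair amount of structural machinery (Bernstein centre, finite length for admissible finitely generated modules, the Iwahori--Hecke module structure) that the paper sidesteps. The ``alternative'' you offer in the final sentence --- citing that $\cind_{K_\p}^{G_\p}\Q/(T_\p-\mu)$ is the unramified principal series with parameter $\mu$ --- is in substance the paper's own proof. One small imprecision: your assertion that every irreducible subquotient of $V$ ``is an unramified representation'' should read that it lies in the unramified Bernstein block with the given infinitesimal character (the Steinberg representation itself is not unramified); your subsequent count via $K_\p$- and $I$-invariants is what actually identifies the constituents as $\Q$ and $\St_\p(\Q)$, so the conclusion is unaffected.
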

\begin{proof}
Since $\cind_{K_\p}^{G_\p}\Q$ is a free $\Q[T_\p]$-module by Theorem 10 of \cite{BL}, exactness on the left follows.
For checking exactness in the middle we may pass to complex coefficients.
By Theorem 3.2 of \cite{SK} the cokernel of $T_\p-\lambda$ is exactly the smooth contragradient of $\Ind_{B_\p}^{G_\p}\C$.
\end{proof}

By direct computation Spieß shows that actually an integral version of the above lemma is true (see Lemma 3.11 (c)).
But one only needs the weaker version with rational coefficients for the definition of automorphic $\LI$-invariants.
An advantage of the above argument is that it can easily be generalized to more general reductive groups.

\subsection{Cohomology of $PGL_2$}\label{Cohomology}
We introduce the cohomology groups which we will use to define automorphic $\LI$-invariants and state their basic properties.
Most of the results of this and the subsequent section are straightforward generalizations of results in Section 5 and 6 of \cite{Sp}.
In order to keep this paper as self-contained as possible we give a sketch of all arguments with the necessary modifications.

Throughout this section we fix a ring $R$.
Let $\Div(\PP(F))$ be the free abelian group on $\PP(F)$ with its natural $G(F)$-action.
We write $\Div_0(\PP(F))$ for the kernel of the map $$\Div(\PP(F)) \to \Z,\ \sum_P m_P P \mapsto \sum_P m_P.$$
For a prime $\q$ of $F$ we let
$$\St_{\q}=\St_\q^{\cont}(\Z)=C^{0}(\PP(F_\q),\Z)/\Z$$
be the integral Steinberg representation of $G_\q$ and for a finite set $S$ of primes of $F$ we put
$$\St_S=\bigotimes_{\q\in S}\St_\q.$$

The Steinberg representation with integral coefficients $\St_\q$ is equal to the integral cohomology with compact support of the Bruhat-Tits tree of $G_\q$.
Hence, we have an exact sequence of the form
\begin{align}\label{stres}
0\too\cind_{K_\q}^{G_\p}\Z\too\left(\cind_{I_\q}^{G_\p}\Z\right)/(W_\q=-1)\too\St_\q\too 0,
\end{align}
where $I_\q\subseteq K_\q$ denotes the Iwahori subgroup of matrices, which are congruent to an upper triangular matrix modulo $\q$, and $W_\q$ is the Atkin-Lehner involution at $\q$.

Given an $R$-module $N$, finite sets $S_0\subseteq S$ of primes of $F$ and a compact, open subgroup $K\subseteq G(\A\sinfty)$ we define $\Ah(K,S_0;N)^{S}$ to be the space of all functions
\begin{align*}
\Phi\colon G(\A\sinfty)\too\Hom_\Z(\St_{S_0},N)
\end{align*}
such that $\Phi(gk)=\Phi(g)$ for all $k\in K$ and $g\in G(\A\sinfty)$.
Furthermore, we define
$$\Ah_c(K,S_0;N)^{S}=\Hom_\Z(\Div_0(\PP(F)),\Ah(K,S_0;N)^{S}).$$

\begin{Def}
For a locally constant character $\epsilon\colon G_\infty\to \{\pm 1\}$ and a set of data as above we define
\begin{align*}
\MS^{i}(K,S_0;N)^{S,\epsilon} &= \HH^{i+r+s}(G(F),\Ah(K,S_0,N)^{S}(\epsilon))\\
\intertext{and}
\MS_c^{i}(K,S_0;N)^{S,\epsilon} &= \HH^{i+r+s-1}(G(F),\Ah_c(K,S_0,N)^{S}(\epsilon)).
\end{align*}
Furthermore, we consider the direct sum
$$\MS^{\ast}(K,S_0;N)^{S,\epsilon}=\bigoplus_{i\in\Z}\MS^{i}(K,S_0;N)^{S,\epsilon}.$$
%$$\MS^{i}(S_0;N)^{S,\epsilon}= \varinjlim_K \MS^{i}(K,S_0;N)^{S,\epsilon}$$
%with its natural $G(\A_F\sinfty)$-action.
\end{Def}
Let $\n\subseteq \mathcal{O}$ be a non-zero ideal.
If $K=K_0(\n)^S\subseteq G(\A\sinfty)$ is the subgroup of all integral matrices, which are congruent to a upper triangular matrix modulo $\n$, we set
$$\MS^{i}(\n,S_0;N)^{S,\epsilon}=\MS^{i}(K_0(\n)^S,S_0;N)^{S,\epsilon}$$
and similarly for the version with compact support.
%If $\epsilon$ is the trivial character, we sometimes drop it from the notation.

The short exact sequence 
\begin{align}\label{divres}
0\too \Div_0(\PP(F)) \too \Div(\PP(F)) \too \Z\too 0
\end{align}
induces a short exact sequence
$$0\too\Ah(K,S_0;N)^{S}\too \Hom_\Z(\Div(\PP(F)),\Ah(K,S_0;N)^{S})\too \Ah_c(K,S_0;N)^{S}\too 0$$
and thus a long exact sequence in cohomology.
We write
\begin{align}\label{connect}
\delta\colon\MS^{i}_c(K,S_0;N)^{S,\epsilon}\too \MS^{i}(K,S_0;N)^{S,\epsilon}
\end{align}
for the connecting homomorphism.

\begin{Pro}\label{FlachundNoethersch}
Given a set of data as above we have:
\begin{enumerate}[(a)]
\item\label{FuN} The $R$-module $\MS_{?}^i(K,S_0;R)^{S,\epsilon}$ is finitely generated for $?\in\left\{\emptyset,c\right\}$ and all $i\in\Z$ if $R$ is Noetherian.
\item\label{FN2} If $N$ is a flat $R$-module, then the canonical map
	 $$\MS_?^i(K,S_0;R)^{S,\epsilon} \otimes_R N \too \MS_?^i(K,S_0;N)^{S,\epsilon}$$
	is an isomorphism for $?\in\left\{\emptyset,c\right\}$ and all $i\in\Z$.
\end{enumerate}
\end{Pro}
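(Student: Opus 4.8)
The plan is to reduce everything to the known homological finiteness and base-change properties of group cohomology for the arithmetic group $G(F)$, which acts on a finite-dimensional contractible space (the product of symmetric spaces of $G_v$ for $v \mid \infty$). The key observation is that the coefficient module $\Ah(K,S_0;N)^S$ is built out of two pieces: the adelic part, which for fixed $K$ is a \emph{finitely generated} $\Z$-module worth of functions on the finite set $G(\A\sinfty)/K$ (a finite set by strong approximation for $PGL_2$, or rather for $SL_2$ together with finiteness of the class number), and the local Steinberg part $\Hom_\Z(\St_{S_0},N)$. So as a $G(F)$-module, $\Ah(K,S_0;N)^S$ is a direct sum, indexed by the finite set $G(F)\backslash G(\A\sinfty)/K$, of modules of the shape $\Hom_\Z(\St_{S_0}, N)$ with $G(F)$ acting through the relevant congruence subgroup $\Gamma$ (which is of finite covolume, hence of type $VFL$ after passing to a torsion-free finite-index subgroup and using that $\Gamma$ acts on a finite-dimensional contractible CW-complex with finite stabilizers).

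\textbf{Step (a), finite generation.} First I would fix a torsion-free finite-index normal subgroup $\Gamma' \subseteq \Gamma$ acting freely on the relevant contractible finite-dimensional complex $X$ with compact quotient (Borel--Serre), so that $\HH^*(\Gamma', M)$ is computed by a finite complex of finitely generated free $\Z[\Gamma']$-modules. Hence for any finitely generated $\Z$-module $M$, $\HH^*(\Gamma', M)$ is finitely generated over $\Z$; then $\HH^*(\Gamma, M)$ is a subquotient of this (up to the order of $\Gamma/\Gamma'$ being invertible, but one argues directly via the Hochschild--Serre spectral sequence or the transfer-free statement that $\HH^*(\Gamma,M)$ is finitely generated because $\Gamma$ itself is of type $FP_\infty$). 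The remaining issue is that the coefficient module is $\Hom_\Z(\St_{S_0}, R)$, and $\St_{S_0}$ is \emph{not} finitely generated over $\Z$. Here I would invoke the resolutions \eqref{stres}: tensoring the $S_0$-fold tensor product of \eqref{stres} over the primes in $S_0$ yields a finite resolution of $\St_{S_0}$ by modules of the form $\cind$ from open compact subgroups, i.e.\ by permutation modules $\Z[G_\q/K_\q]$ and $\Z[G_\q/I_\q]$, which are free $\Z$-modules but still infinitely generated. The point is that after taking $\Hom_\Z(-,R)$ and then $G(F)$-cohomology with the adelic part folded in, each of these pieces becomes $\HH^*$ of a congruence subgroup of \emph{deeper level} (at the primes in $S_0$) with coefficients in the finitely generated $R$-module $R$ itself — because $\Hom_\Z(\Z[G_\q/K_\q], -) \otimes \Ah$ reshuffles into $\Ah$ of a smaller level group. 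So the finite resolution converts $\MS^i(K,S_0;R)^{S,\epsilon}$ into a finite iterated extension of groups $\HH^{i+r+s}(\Gamma'', R)$ for various congruence subgroups $\Gamma''$, each finitely generated over $R$ when $R$ is Noetherian. The compactly supported version follows by the same argument applied after the extra $\Hom_\Z(\Div_0(\PP(F)),-)$, using that $\Div_0(\PP(F))$ is, level by level / after the same resolution trick, handled by \eqref{divres} relating it to $\Div(\PP(F))$ and $\Z$ — or simply by the long exact sequence \eqref{connect} together with finiteness of $\MS^i$ and of $\HH^*(G(F),\Hom_\Z(\Div(\PP(F)),\Ah))$, the latter again reducing to congruence cohomology of $\PP(F) = \PP(F_{\q})$-type permutation modules.

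\textbf{Step (b), base change.} For flat $N$, I would again use that $\HH^*(\Gamma', -)$ is computed by the finite free complex $C^\bullet = \Hom_{\Z[\Gamma']}(P_\bullet, -)$ coming from Borel--Serre, so that $C^\bullet(\Ah(K,S_0;R)^S) \otimes_R N \cong C^\bullet(\Ah(K,S_0;R)^S \otimes_R N)$ — here one needs $\Ah(K,S_0;R)^S \otimes_R N \cong \Ah(K,S_0;N)^S$, which holds because $\Hom_\Z(\St_{S_0}, R)\otimes_R N \cong \Hom_\Z(\St_{S_0}, N)$ when $N$ is flat over $R$ and $\St_{S_0}$ is a direct limit of finitely generated free $\Z$-modules (so $\Hom_\Z(\St_{S_0},-)$ commutes with filtered colimits of the relevant shape, and flatness handles the tensor), together with the fact that the function space over the finite set $G(\A\sinfty)/K$ is a finite product. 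Then taking cohomology of a bounded complex of flat (in fact free) $R$-modules commutes with $-\otimes_R N$ precisely when $N$ is flat (this is the standard fact: $\HH^i(C^\bullet \otimes_R N) \cong \HH^i(C^\bullet)\otimes_R N$ for $N$ flat and $C^\bullet$ bounded). Passing from $\Gamma'$ to $\Gamma$, and then summing over the finite adelic double-coset set, and finally taking the $\epsilon$-eigenspace (a direct summand, since $2$ acts invertibly — or handled by the sign decomposition under $G_\infty$), preserves the isomorphism. The compactly supported version is identical with $\Ah_c = \Hom_\Z(\Div_0(\PP(F)), \Ah)$ in place of $\Ah$, noting $\Div_0(\PP(F))$ is $\Z$-free so the $\Hom_\Z$ still commutes with the flat base change.

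\textbf{Main obstacle.} The one genuine subtlety is the non-finite-generation of $\St_{S_0}$ and the $\Hom_\Z$ against it: one must be careful that $\Hom_\Z(\St_{S_0}, R)\otimes_R N \to \Hom_\Z(\St_{S_0}, N)$ is an isomorphism (this uses that $\St_{S_0}$ is a countable direct limit of \emph{split injections} of finitely generated frees, so that the dual is a limit and the natural map is iso for flat $N$) and that the resulting $G(F)$-cohomology is still finitely generated — which is exactly what the finite resolutions \eqref{stres} and \eqref{divres}, together with the reinterpretation of $\Hom_\Z(\cind_{K_\q}^{G_\q}\Z, \Ah(K,\ldots))$ as $\Ah$ of a deeper-level group, are designed to circumvent. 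Everything else is the standard machinery of cohomology of $S$-arithmetic groups acting on Borel--Serre type spaces.
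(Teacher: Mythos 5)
Your overall strategy agrees with the paper's: both use the exact sequence \eqref{stres} to peel off the Steinberg factors, then combine Shapiro's Lemma and strong approximation with the finiteness properties of arithmetic groups, and handle the compactly supported version via \eqref{divres}. However, there is a genuine gap in your Step~(b) and in the "Main obstacle" paragraph, and a smaller inaccuracy in the geometry.

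The gap: you assert that $\Hom_\Z(\St_{S_0}, R)\otimes_R N \to \Hom_\Z(\St_{S_0}, N)$ is an isomorphism for flat $N$, justified by $\St_{S_0}$ being a countable directed colimit of split injections of finitely generated free $\Z$-modules. This is false. That structure only gives $\Hom_\Z(\St_{S_0}, R)\cong\varprojlim R^{n_i}$, a countable inverse limit (equivalently a countable product up to a summand), and inverse limits do not commute with $-\otimes_R N$ even for flat $N$: already $\Z^{\N}\otimes_\Z \Q \to \Q^{\N}$ fails to be surjective (the image consists of sequences with bounded denominators), and this is exactly the case $R=\Z_p$, $N=\Q_p$ that is used in the paper. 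So the direct approach of tensoring the finite free complex computing $\HH^*(\Gamma',\Ah(K,S_0;R)^S)$ does not give the coefficient module you want. The fix, which is what the paper actually does, is to run the reduction via \eqref{stres} \emph{first}, thereby eliminating $\St_{S_0}$ from the coefficients and reducing to the case $S_0=\emptyset$, and only then apply the flat base change to $\HH^*(\Gamma_i, R)$ for the resulting $S$-arithmetic groups $\Gamma_i$; the VFL property of $\Gamma_i$ makes these behave like cohomology of a bounded complex of finitely generated projectives, where flat base change is safe. Your own text gestures at this reordering ("which is exactly what the finite resolutions \dots are designed to circumvent"), but your parenthetical claim that the $\Hom$--tensor comparison holds directly is simply wrong, and if left as written it would make Step~(b) collapse.

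Secondary point: after Shapiro the relevant groups are $S$-arithmetic subgroups of $G(F)$, not arithmetic ones, so the contractible space they act on properly with finite stabilizers is the product of the archimedean symmetric spaces \emph{and} the Bruhat--Tits buildings/trees at the places in $S$; the Borel--Serre-type compactification of this product, or Serre's VFL theorem for $S$-arithmetic groups (which is what the paper cites), is what is needed. Mentioning only $\prod_{v\mid\infty} G_v/K_v$ leaves the $\p$-arithmetic part unaccounted for.
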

\begin{proof}
In the case $?=c$ and $F$ totally real this is Proposition 5.6 of \cite{Sp}.

For every $\q\in S_0$ the exact sequence \eqref{stres} induces an exact sequence of the form
$$0\to\Ah(K,S_0;N)^{S}\to(\Ah(K\times I_\q,S_0\setminus\left\{\q\right\};N)^{S})^{W_\q=-1}
\to \Ah(K\times K_\q,S_0\setminus\left\{\q\right\};N)^{S}\to 0.$$
By considering the long exact sequence in cohomology it suffices to prove both claims in the case $S_0=\emptyset.$

By Shapiro's Lemma and strong approximation we get isomorphisms
$$\MS^i(K,\emptyset;R)^{S,\epsilon}=\bigoplus_{i=1}^{n}H^{i+r+s}(\Gamma_i,R)$$
with $S$-arithmetic subgroups $\Gamma_i\subseteq G(F)$, which are independent of $R$.
Since $S$-arithmetic subgroups are of type (VFL), both claims follows for $?=\emptyset$ (see \cite{Se2}).
The case $?=c$ follows similarly by considering the long exact sequence associated to \eqref{divres}.
\end{proof}

Since by Proposition \ref{FlachundNoethersch} \eqref{FuN} the projective system $(\MS^i(K,S_0;\Z/p^n)^{S,\epsilon})_n$ (and also the version with compact support) fulfills the Mittag-Leffler condition we get the following
\begin{Cor}\label{limits}
The canonical map
$$\MS_?^i(K,S_0;\Z_p)^{S,\epsilon}\too \varprojlim_n \MS_?^i(K,S_0;\Z/p^n)^{S,\epsilon}$$
is an isomorphism for $?\in\left\{\emptyset,c\right\}$ and all $i\in\Z$.
\end{Cor}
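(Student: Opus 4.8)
The plan is to present the mod-$p^n$ groups as the cohomology of a tower of cochain complexes with surjective transition maps whose inverse limit computes the $\Z_p$-version, and then to read off the claim from the standard $\varprojlim$-exact sequence, using Proposition~\ref{FlachundNoethersch}\,\eqref{FuN} to kill the $\varprojlim^1$-term. The first point I would record is that, regarded as a functor of the coefficient module $N$, both $\Ah(K,S_0;N)^{S}(\epsilon)$ and $\Ah_c(K,S_0;N)^{S}(\epsilon)$ are exact and commute with inverse limits: $\Ah(K,S_0;N)^{S}$ is a product, indexed by the discrete set $G(\A\sinfty)/K$, of copies of $\Hom_\Z(\St_{S_0},N)$, and both $\St_{S_0}$ and $\Div_0(\PP(F))$ are free $\Z$-modules, so the functors $\Hom_\Z(\St_{S_0},-)$ and $\Hom_\Z(\Div_0(\PP(F)),-)$ are exact and limit-preserving, while the twist by $\epsilon$ is invisible on underlying $\Z$-modules. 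Hence, for $?\in\{\emptyset,c\}$, we obtain $\Ah_?(K,S_0;\Z_p)^{S}(\epsilon)=\varprojlim_n \Ah_?(K,S_0;\Z/p^n)^{S}(\epsilon)$ with surjective transition maps.

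Next I would fix a resolution $P_\bullet\too\Z$ of the trivial $\Z[G(F)]$-module by free modules, so that every group in sight is computed by applying $H^{\ast}\bigl(\Hom_{\Z[G(F)]}(P_\bullet,-)\bigr)$ to the relevant coefficient module. Since $\Hom_{\Z[G(F)]}(P_\bullet,-)$ is exact in its argument and commutes with inverse limits, the tower of cochain complexes $C^{(n)}_\bullet=\Hom_{\Z[G(F)]}\bigl(P_\bullet,\Ah_?(K,S_0;\Z/p^n)^{S}(\epsilon)\bigr)$ has degreewise surjective transition maps and $\varprojlim_n C^{(n)}_\bullet$ computes $H^{\ast}\bigl(G(F),\Ah_?(K,S_0;\Z_p)^{S}(\epsilon)\bigr)$. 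The $\varprojlim$-exact sequence for such a tower then yields, for every $m$, a short exact sequence
\[
0\too \varprojlim_n^{1} H^{m-1}(C^{(n)}_\bullet)\too H^{m}\bigl(G(F),\Ah_?(K,S_0;\Z_p)^{S}(\epsilon)\bigr)\too \varprojlim_n H^{m}(C^{(n)}_\bullet)\too 0 .
\]

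To finish, note that $H^{m-1}(C^{(n)}_\bullet)$ is by definition one of the groups $\MS_?^{j}(K,S_0;\Z/p^n)^{S,\epsilon}$, which is finitely generated over $\Z/p^n$ by Proposition~\ref{FlachundNoethersch}\,\eqref{FuN}, hence \emph{finite}; an inverse system of finite abelian groups satisfies the Mittag-Leffler condition, so its $\varprojlim_n^{1}$ vanishes and the middle arrow above is an isomorphism. Taking $m=i+r+s$ handles the case $?=\emptyset$ and $m=i+r+s-1$ handles the case $?=c$, which gives the corollary. I do not expect a genuine obstacle here: beyond the finiteness already in hand, the only input that really needs checking is the exactness and limit-preservation of the coefficient functors — this is precisely what legitimizes both the identity $\Ah_?(K,S_0;\Z_p)^{S}(\epsilon)=\varprojlim_n\Ah_?(K,S_0;\Z/p^n)^{S}(\epsilon)$ and the applicability of the $\varprojlim$-exact sequence to the tower $C^{(n)}_\bullet$ — and that hinges only on the freeness of $\St_{S_0}$ and $\Div_0(\PP(F))$ as $\Z$-modules, together with a tiny amount of bookkeeping to treat the compact-support variant by the very same mechanism.
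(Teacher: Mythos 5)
Your proof is correct and is essentially the argument the paper has in mind: the paper's proof simply invokes the Mittag-Leffler condition for the tower $(\MS_?^i(K,S_0;\Z/p^n)^{S,\epsilon})_n$, which holds by the finiteness statement of Proposition~\ref{FlachundNoethersch}\,\eqref{FuN}, and you have supplied the standard bookkeeping (surjectivity of transition maps in the cochain tower, identification of the limit of coefficient modules, the $\varprojlim$--$\varprojlim^1$ sequence) that makes that invocation rigorous.
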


\subsection{Automorphic L-invariants}\label{Definition}
In the following we will define $\LI$-invariants for our fixed automorphic representation $\pi$.
We show that the definition does not depend on whether we take cohomology with or without compact support.
At the end of the section we will state our main conjecture.

Let $\mathbb{T}$ the $\Z$-algebra generated by all good Hecke operators away from $p$, i.e. the polynomial ring on the variables $T_\q$ with $\q\nmid p\mathfrak{f}_\pi$.
In particular, we do not include Hecke operators at our fixed prime $\p$.
For $\q\nmid \mathfrak{f}_\pi$ let $\lambda_\q$ be the eigenvalue of $T_\q$ on a spherical vector of $\pi_\q$.
Given a $\T$-module $M$ we put
$$M[\pi]=\left\{ m\in M \mid T_\q(m)= \lambda_\q\cdot m\ \forall \q \right\}.$$

Evaluation at a normalized Iwahori-fixed vector yields a Hecke-equivariant map
\begin{align}\label{Iwahori}
\MS_?^i(\mathfrak{f}_\pi,\left\{\p\right\};R)^{\left\{\p\right\},\epsilon}\too \MS_?^i(\mathfrak{f}_\pi,\emptyset;R)^{\epsilon}
\end{align}
for any ring $R$.

\begin{Pro}\label{dimensions}
Let $\Omega$ be a field of characteristic $0$.
We have:
\begin{enumerate}[(a)]
\item\label{firstdim} For all $i\geq 0$ the connecting homomorphism \eqref{connect} induces an isomorphism
$$\delta\colon \MS_c^i(\mathfrak{f}_\pi,\emptyset;\Omega)^{\epsilon}[\pi]\too \MS^i(\mathfrak{f}_\pi,\emptyset;\Omega)^{\epsilon}[\pi]$$
and we have
 $\dim \MS_?^i(\mathfrak{f}_\pi,\emptyset;\Omega)^{\epsilon}[\pi]
=\binom{s}{i}$ for $?\in\left\{\emptyset,c\right\}$.
\item\label{seconddim} The map \eqref{Iwahori} induces an isomorphism
$$\MS_?^i(\mathfrak{f}_\pi,\left\{\p\right\};\Omega)^{\left\{\p\right\},\epsilon}[\pi]\too \MS_?^i(\mathfrak{f}_\pi,\emptyset;\Omega)^{\epsilon}[\pi]$$
for $?\in\left\{\emptyset,c\right\}$ and all $i\geq 0$.
\item\label{thirddim} For all $i\geq 0$ the connecting homomorphism \eqref{connect} induces an isomorphism
$$\delta\colon \MS_c^i(\mathfrak{f}_\pi,\left\{\p\right\};\Omega)^{\left\{\p\right\},\epsilon}[\pi]\too \MS^i(\mathfrak{f}_\pi,\left\{\p\right\};\Omega)^{\left\{\p\right\},\epsilon}[\pi]$$
and we have $\dim \MS_?^i(\mathfrak{f}_\pi,\left\{\p\right\};\Omega)^{\left\{\p\right\},\epsilon}[\pi]
=\binom{s}{i}$ for $?\in\left\{\emptyset,c\right\}$.
\end{enumerate}
\end{Pro}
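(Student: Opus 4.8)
The plan is to reduce everything to the single assertion that the system of Hecke eigenvalues attached to $\pi$ occurs in $\HH^{\ast}(\Gamma,\Omega)$ only in degrees $r+s,\dots,r+2s$, with the $\pi$-isotypic part of $\HH^{r+s+i}$ being $\binom{s}{i}$-dimensional, together with Poincaré duality to pass between compact and non-compact support. Concretely, I would first treat part \eqref{firstdim} directly. By Proposition \ref{FlachundNoethersch} \eqref{FN2} we may take $\Omega=\C$. By Shapiro's lemma and strong approximation (as in the proof of Proposition \ref{FlachundNoethersch}), $\MS^{i}(\mathfrak{f}_\pi,\emptyset;\C)^{\epsilon}[\pi]$ is a direct sum of $\HH^{i+r+s}(\Gamma,\C)(\epsilon)[\pi]$ over the finitely many relevant $S$-arithmetic — here in fact arithmetic, since $S=\emptyset$ — groups $\Gamma$. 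Matsushima's formula identifies the $\pi$-part of $\HH^{\ast}(\Gamma,\C)$ with $\HH^{\ast}(\mathfrak{g},K_\infty;\pi_\infty)\otimes(\pi^{\infty})^{K}$, and the $(\mathfrak{g},K_\infty)$-cohomology of the cohomological tempered representation $\pi_\infty$ of $G_\infty=PGL_2(\R)^{r}\times PGL_2(\C)^{s}$ is concentrated in degrees $[r+s,r+2s]$ with the degree-$(r+s+i)$ piece of dimension $\binom{s}{i}$ (each real factor contributes its discrete-series cohomology in the single degree $1$, each complex factor contributes an exterior algebra on one generator in degree $1$ and one in degree $2$). The $\epsilon$-isotypic condition selects one line in the finite-dimensional space of automorphic forms in $\pi$ with the right level; since $\pi$ is cuspidal this line is the same whether one computes ordinary or compactly supported cohomology of the (non-compact) locally symmetric space, which gives the isomorphism $\delta$ on $\pi$-parts. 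Alternatively, and more self-containedly, cuspidal classes lie in the image of compactly supported cohomology and inject into ordinary cohomology (interior cohomology), and the long exact sequence \eqref{divres} together with the vanishing of the boundary contribution on the $\pi$-part gives the isomorphism.

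Next, for part \eqref{seconddim}, I would use the smooth-representation-theoretic input already assembled in Section \ref{Steinberg}. The map \eqref{Iwahori} is induced by evaluating a function on $G(\A^{\{\p\},\infty})$-valued data against a normalized Iwahori-fixed vector of $\St_\p$, i.e.\ it is the map on $G(F)$-cohomology induced by a map of coefficient modules $\Hom_\Z(\St_\p,N)\to N$. After localizing at $\pi$ and tensoring up to $\C$, the relevant local statement is that $\pi_\p$-equivariant functionals factor as claimed: since $\pi_\p=\St_\p(\C)$ is the Steinberg representation, $\Hom_{G_\p}(\St_\p(\C),\St_\p^{\cont}(\C)^{\vee})$ — equivalently the space of Iwahori-fixed vectors in $\pi_\p$ — is one-dimensional, and evaluation at a generator is an isomorphism onto the $\p$-deleted side. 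Thus \eqref{Iwahori} is an isomorphism after passing to $\pi$-isotypic parts; this works identically for $?=\emptyset$ and $?=c$ because it is a statement about the coefficient module and commutes with the functor $\Hom_\Z(\Div_0(\PP(F)),-)$.

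Finally, part \eqref{thirddim} is the combination of the first two: by \eqref{seconddim} the $\p$-arithmetic spaces $\MS_?^i(\mathfrak{f}_\pi,\{\p\};\Omega)^{\{\p\},\epsilon}[\pi]$ are identified with the arithmetic spaces $\MS_?^i(\mathfrak{f}_\pi,\emptyset;\Omega)^{\epsilon}[\pi]$, and under this identification the connecting map \eqref{connect} for the $\p$-arithmetic groups is intertwined with the one for the arithmetic groups — because \eqref{connect} comes from the sequence \eqref{divres}, which is the same whether or not the $\p$-component is present, so the evaluation map is compatible with $\delta$. Hence \eqref{firstdim} transports verbatim to give both the isomorphism $\delta$ and the dimension count $\binom{s}{i}$ in the $\p$-arithmetic setting. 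The main obstacle is the input into \eqref{firstdim}: one needs that the $\pi$-part of the cohomology of the open locally symmetric space is exhausted by cuspidal (equivalently interior) cohomology with the expected $(\mathfrak{g},K_\infty)$-cohomology dimensions, and that the Eisenstein/boundary contribution does not meet the $\pi$-eigenspace. For cuspidal $\pi$ this is standard (Borel–Wallach, and the fact that the $\pi$-eigensystem is not the trivial-representation eigensystem so cannot appear in the boundary), but it is the one place where genuine automorphic input, rather than formal homological algebra, is required; everything else is bookkeeping that follows the template of Sections 5–6 of \cite{Sp}.
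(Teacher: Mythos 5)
Your treatments of parts \eqref{firstdim} and \eqref{thirddim} match the paper's logic: part \eqref{firstdim} is the standard $(\mathfrak{g},K_\infty)$-cohomology/Matsushima computation (the paper cites Harder for this) together with the vanishing of the boundary/Eisenstein contribution on the $\pi$-eigenspace, and part \eqref{thirddim} is the formal combination of \eqref{firstdim} and \eqref{seconddim}; both are fine.

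The gap is in part \eqref{seconddim}. Your argument there is purely local representation theory: Iwahori-fixed vectors in $\pi_\p=\St_\p(\C)$ form a line, so ``evaluation at a generator is an isomorphism onto the $\p$-deleted side.'' That does not follow. The object $\MS_?^i(\mathfrak{f}_\pi,\{\p\};\Omega)^{\{\p\},\epsilon}$ is $G(F)$-cohomology of a $\p$-arithmetic coefficient module, and there is no a priori decomposition of its $\pi$-isotypic part as (a one-dimensional local factor at $\p$) tensor (the arithmetic cohomology); indeed, you have not independently computed its dimension, so even a ``dimensions match'' argument is unavailable. Identifying $\Hom_{G_\p}(\St_\p(\C),\St_\p^{\cont}(\C)^\vee)$ with the Iwahori-fixed space of $\pi_\p$ is also not correct (the former is one-dimensional by Schur, but it is not the same object, and neither gives an isomorphism of cohomology groups by itself). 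What the paper actually does — and what is genuinely needed — is to use the short exact sequence \eqref{stres} expressing $\St_\q$ as the compactly supported cohomology of the Bruhat--Tits tree, which on global function spaces yields a long exact sequence
$$\cdots \to \MS_?^i(\mathfrak{f}_\pi,\{\p\};\Omega)^{\{\p\},\epsilon} \to (\MS_?^i(\mathfrak{f}_\pi,\emptyset;\Omega)^{\epsilon})^{W_\p=-1} \to \MS_?^i(\p^{-1}\mathfrak{f}_\pi,\emptyset;\Omega)^{\epsilon} \to \cdots,$$
and then to observe that the third term has vanishing $\pi$-part because $\pi_\p=\St_\p(\C)$ has no $K_\p$-fixed vector (strong multiplicity one), while the $W_\p=-1$ condition is automatic on the $\pi$-part since $W_\p$ acts by $-1$ on the Iwahori-fixed line of $\St_\p$. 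This is the homological input that turns the local one-dimensionality into the asserted isomorphism; it cannot be replaced by the one-line Schur/Frobenius remark you give. (You do gesture at ``the template of Sections 5--6 of \cite{Sp},'' where this argument lives, but the step you actually write down is different and does not close the gap.)
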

\begin{proof}
The last claim follows directly from the first two.
The first claim follows from strong multiplicity one and the standard calculation of the contribution of cuspidal representations to cohomology (see for example \cite{Ha}).
Note that we assume that the finite part of $\pi$ can be defined over the rationals.
 
The second claim is a generalization of \cite{Sp}, Proposition 4.8:
as explained in the proof of Lemma \ref{FlachundNoethersch} we have a long exact sequence in cohomology of the form
\begin{align*}
\ldots
\to \MS_?^i(\mathfrak{f}_\pi,\left\{\p\right\};\Omega)^{\left\{\p\right\},\epsilon} 
\to (\MS_?^i(\mathfrak{f}_\pi,\emptyset;\Omega)^{\epsilon})^{W_\p=-1}
\to \MS_?^i(\p^{-1}\mathfrak{f}_\pi,\emptyset;\Omega)^{\epsilon}
\to\ldots
\end{align*}

Since $\pi_\p=\St_\p(\C)$ has no spherical vector, strong multiplicity one for $\PGL_2$ implies that
$$\MS_?^\ast(\p^{-1}\mathfrak{f}_\pi,\emptyset;\Omega)^{\epsilon}[\pi]=0.$$
Therefore, after taking $\pi$-isotypic components the first arrow in the above long exact sequence becomes an isomorphism.
By strong multiplicity one and the fact that $W_\p$ acts on the Iwahori fixed subspace of $\pi_\p$ by multiplication with $-1$ the canonical embedding
$$(\MS_?^i(\mathfrak{f}_\pi,\emptyset;\Omega)^{\epsilon})^{W_\p=-1}[\pi]\into (\MS_?^i(\mathfrak{f}_\pi,\emptyset;\Omega)^{\epsilon})[\pi]$$
is an isomorphism for all $i$.
\end{proof}

Every element in $\Hom(\St_{\p},\Z_p)$ can be uniquely extended to a continuous functional on $\St^{\cont}_{\p}(\Q_p)$.
Thus, we get a pairing
\begin{align*}
\Ah(\n,\left\{\p\right\};\Z_p)^{\left\{\p\right\}}\times \St^{\cont}_{\p}(\Q_p) \too 
\Ah(\n,\emptyset;\Z_p)^{\left\{\p\right\}}\otimes \Q_p
\end{align*}
for every non-zero ideal $\n\subseteq\mathcal{O}$.
Hence, using Proposition \ref{FlachundNoethersch} \eqref{FN2} we get a cup product pairing
\begin{align*}
\MS_?^{i}(\n,\left\{\p\right\};\Q_p)^{\left\{\p\right\},\epsilon}\times \HH^{j}(G(F),\St_{\p}^{\cont}(\Q_p))
\xlongrightarrow{\cup}\MS^{i+j}_?(\n,\emptyset;\Q_p)^{\left\{\p\right\},\epsilon}.
\end{align*}

Let $\ell\colon F_{\p}^{\ast}\to\Q_p$ be a continuous homomorphism and $b_{\ell}\in \HH^{1}(G(F),\St^{\an}_{\p}(\Q_p))$ be the (restriction of the) cohomology class associated to it in Section \ref{Steinberg}. 
Taking the cup product with $b_\ell$ induces maps
\begin{align*}
c^{(i)}_\ell(\pi)_?^{\epsilon}\colon \MS_?^{i}(\mathfrak{f}_\pi,\left\{\p\right\};\Q_p)^{\left\{\p\right\},\epsilon}[\pi]&\xrightarrow{\_\cup b_{\ell}} \MS_?^{i+1}(\mathfrak{f}_\pi,\emptyset;\Q_p)^{\left\{\p\right\},\epsilon}[\pi]\\
\intertext{and}
c^{(\ast)}_\ell(\pi)^{\epsilon}\colon \MS^{\ast}(\mathfrak{f}_\pi,\left\{\p\right\};\Q_p)^{\left\{\p\right\},\epsilon}[\pi]&\xrightarrow{\_\cup b_{\ell}} \MS^{\ast}(\mathfrak{f}_\pi,\emptyset;\Q_p)^{\left\{\p\right\},\epsilon}[\pi]
\end{align*}
In addition, we define
\begin{align*}
c^{(\ast)}_\ell(\pi)=\oplus_{\epsilon}c^{(\ast)}_\ell(\pi)^{\epsilon}.
\end{align*}

\begin{Lem}
Let $\ord_{\p}\colon F_{\p}^{\ast}\to \Q\subseteq \Q_p$ be the normalized valuation.
Then $c^{(i)}_{\ord_{\p}}(\pi)_?^{\epsilon}$ is an isomorphism for $?\in\left\{\emptyset,c\right\}$, every sign character $\epsilon$  and all $i$.
\end{Lem}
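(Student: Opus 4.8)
The plan is to recognise $c^{(i)}_{\ord_{\p}}(\pi)_?^{\epsilon}$ as, up to sign, a connecting homomorphism whose middle term has vanishing $\pi$-isotypic cohomology; the reason for that vanishing is that the middle term is assembled from the cohomology at \emph{spherical} level at $\p$, to which $\pi$ contributes nothing because $\pi_{\p}=\St_{\p}(\C)$ has no $K_{\p}$-fixed vector. To set this up, fix $\epsilon$ and write $\mathcal{N}$ for $\Ah(\mathfrak{f}_\pi,\emptyset;\Q)^{\{\p\}}$ if $?=\emptyset$ and for $\Ah_c(\mathfrak{f}_\pi,\emptyset;\Q)^{\{\p\}}$ if $?=c$; in both cases one has $\Ah_?(\mathfrak{f}_\pi,\{\p\};\Q)^{\{\p\}}=\Hom_\Z(\St_\p,\mathcal{N})$ as $\Q[G(F)]$-modules, where $G(F)$ acts diagonally, through $G_\p$ on $\St_\p$ and by translation on $\mathcal{N}$. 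Recall that $b_{\ord_\p}$ is the class of the extension \eqref{extseq} for $\ell=\ord_\p$, which is defined over $\Q$. Applying the exact functor $\Hom_\Q(-,\mathcal{N})$ to $0\to\St_\p(\Q)\to\mathcal{E}(\ord_\p)\to\Q\to 0$ yields a short exact sequence of $\Q[G(F)]$-modules
$$0\too\mathcal{N}\too\Hom_\Q(\mathcal{E}(\ord_\p),\mathcal{N})\too\Hom_\Z(\St_\p,\mathcal{N})\too 0.$$
By the standard compatibility of cup products with connecting homomorphisms, and because the continuous evaluation pairing used to define the cup product in the lemma is precisely the $\Hom_\Q(-,\mathcal{N})$-dual of \eqref{extseq}, the connecting homomorphisms of this sequence recover, on $\pi$-isotypic parts and after the harmless base change from $\Q$ to $\Q_p$ (Proposition \ref{FlachundNoethersch} \eqref{FN2}), the maps $\pm c^{(i)}_{\ord_\p}(\pi)_?^{\epsilon}$ for all $i$. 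Hence it suffices to prove $\HH^{j}(G(F),\Hom_\Q(\mathcal{E}(\ord_\p),\mathcal{N})(\epsilon))[\pi]=0$ for all $j$.

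Here I would invoke Proposition \ref{smoothext}, which exhibits $\mathcal{E}(\ord_\p)$ as the cokernel of the injective endomorphism $T_\p-\lambda$ of $\cind_{K_\p}^{G_\p}\Q$. Since $\Hom_\Q(-,\mathcal{N})$ is exact, this produces
$$0\too\Hom_\Q(\mathcal{E}(\ord_\p),\mathcal{N})\too\Hom_\Q(\cind_{K_\p}^{G_\p}\Q,\mathcal{N})\xlongrightarrow{(T_\p-\lambda)^{\ast}}\Hom_\Q(\cind_{K_\p}^{G_\p}\Q,\mathcal{N})\too 0,$$
and by the associated long exact sequence it is enough to show $\HH^{j}(G(F),\Hom_\Q(\cind_{K_\p}^{G_\p}\Q,\mathcal{N})(\epsilon))[\pi]=0$ for all $j$. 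Unwinding the definitions, there is a natural $G(F)$-equivariant isomorphism $\Hom_\Q(\cind_{K_\p}^{G_\p}\Q,\mathcal{N})\cong\Ah_?(\p^{-1}\mathfrak{f}_\pi,\emptyset;\Q)$: reattaching the compactly induced $\p$-component with its $G_\p$-action converts the $\p$-arithmetic module $\mathcal{N}$ back into the arithmetic module whose level at $\p$ is the maximal compact $K_\p$ and which is unchanged away from $\p$. Therefore $\HH^{j}(G(F),\Hom_\Q(\cind_{K_\p}^{G_\p}\Q,\mathcal{N})(\epsilon))\cong\MS_?^{j-r-s}(\p^{-1}\mathfrak{f}_\pi,\emptyset;\Q)^{\epsilon}$ (up to a shift by $1$ in the degree when $?=c$), and its $\pi$-isotypic component vanishes: this is exactly the vanishing $\MS^{\ast}_?(\p^{-1}\mathfrak{f}_\pi,\emptyset;\Q)^{\epsilon}[\pi]=0$ established in Proposition \ref{dimensions}, which rests on strong multiplicity one together with the fact that $\pi_\p=\St_\p(\C)$ has no $K_\p$-fixed vector. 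This settles the lemma for $?=\emptyset$ and $?=c$ at once, uniformly in the sign character $\epsilon$. (Alternatively, the compact-support case can be deduced from the other one: the connecting homomorphism $\delta$ of \eqref{connect} commutes up to sign with cup product by $b_{\ord_\p}$, hence with $c^{(i)}_{\ord_\p}(\pi)^{\epsilon}$, and $\delta$ is an isomorphism on $\pi$-isotypic parts by Proposition \ref{dimensions} \eqref{firstdim} and \eqref{thirddim}.)

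The one step I expect to be genuinely delicate is the identification, in the first paragraph, of cup product with $b_{\ord_\p}$ with the connecting homomorphism of the displayed short exact sequence: it requires carefully matching the identification $\Ah_?(\mathfrak{f}_\pi,\{\p\};-)=\Hom_\Z(\St_\p,\Ah_?(\mathfrak{f}_\pi,\emptyset;-))$ with the continuous evaluation pairing defining the cup product, and verifying that this pairing really is the $\Hom_\Q(-,\mathcal{N})$-dual of the extension \eqref{extseq} (including keeping track of $\Q$ versus $\Q_p$ coefficients, which is where Proposition \ref{FlachundNoethersch} \eqref{FN2} enters). Once that reduction is in place, everything that follows is the same Shapiro's lemma and strong approximation bookkeeping that already underlies the proofs of Propositions \ref{FlachundNoethersch} and \ref{dimensions}.
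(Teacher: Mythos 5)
Your proof is correct and is essentially the same argument as the paper's: the paper also applies the $\Hom(-,\mathcal{N})$-dual of the sequence from Proposition \ref{smoothext} (calling the dual of $\mathcal{E}(\ord_\p)$ by the name $\mathcal{D}_{K_\p,?}$), identifies the connecting map for the dualized extension sequence with cup product by $b_{\ord_\p}$, and deduces the isomorphism from the vanishing $\MS^{\ast}_{?}(\p^{-1}\mathfrak{f}_\pi,\emptyset;\Q)^{\epsilon}[\pi]=0$, i.e.\ from strong multiplicity one together with the fact that $\St_\p(\C)$ has no spherical vector. The only difference is one of presentation: you start from the dual of the extension and then unwind it via Proposition \ref{smoothext}, whereas the paper introduces $\mathcal{D}_{K_\p,?}$ as the kernel of $T_\p-\lambda$ first and identifies it with the dual afterwards.
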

\begin{proof}
In the case $F$ totally real and $i=0$ this is \cite{Sp}, Lemma 6.2 \textit{(b)}.

Let $\mathcal{D}_{K_\p,?}$ be the $G(F)$-module defined by
$$0\too\mathcal{D}_{K_\p,?}
\too\Ah_?(\p^{-1}\mathfrak{f}_\pi,\emptyset;\Q)
\xrightarrow{T_\p-(\mathcal{N}(\p)+1)} \Ah_?(\p^{-1}\mathfrak{f}_\pi,\emptyset;\Q)
\too 0. $$
Via the same arguments as in the proof of the proposition above we see that
$$\HH^{i}(G(F),\mathcal{D}_{K_\p,?}(\epsilon))[\pi]=0$$
for all choices of $i$, $?$ and $\epsilon$.

By Proposition \ref{smoothext} we also have a long exact sequence of the form
$$0\too \Ah_?(\mathfrak{f}_\pi,\emptyset;\Q)^{\left\{\p\right\}}
\too \mathcal{D}_{K_\p,?}
\too\Ah_?(\mathfrak{f}_\p,\left\{\p\right\};\Q)^{\left\{\p\right\}}
\too 0$$
and the associated boundary map in cohomology is given by taking the cup product with $b_{\ord_\p}$.
Therefore, the assertion follows.
\end{proof}

\begin{Def}\label{defi}
The $i$-th (automorphic) $\LI$-Invariant $\LI_{\ell}^{(i)}(\pi,\p)^{\epsilon}\in\Q_p$ of $\pi$ at $\p$ with respect to $\ell$ and sign $\epsilon$ is defined by
$$\LI_{\ell}^{(i)}(\pi,\p)^{\epsilon}=\det ((c^{(i)}_{\ord_{\p}}(\pi)^{\epsilon})^{-1} \circ c^{(i)}_{\ell}(\pi)^{\epsilon} ).$$
Similarly, there is a variant using cohomology with compact support:
$$\LI_{\ell}^{(i)}(\pi,\p)_c^{\epsilon}=\det ((c^{(i)}_{\ord_{\p}}(\pi)^{\epsilon}_c)^{-1} \circ c^{(i)}_{\ell}(\pi)^{\epsilon}_c ).$$
\end{Def}

\begin{Rem}\label{onedimensional}
In the case $i=0$ the spaces of modular symbols, which are used to define automorphic $\LI$-invariants, are one-dimensional by Proposition \ref{dimensions} \eqref{thirddim}. Thus, we have the equality
$$c^{(0)}_{\ell}(\pi)_?^{\epsilon}= \LI_{\ell}^{(i)}(\pi,\p)_?^{\epsilon} \cdot c^{(i)}_{\ord_\p}(\pi)_?^{\epsilon}.$$
\end{Rem}

The following proposition shows that it does not matter whether we use cohomology with or without compact support for defining $\LI$-invariants.
\begin{Pro}\label{compact}
We have:
$$\LI_{\ell}^{(i)}(\pi,\p)^{\epsilon}_c=\LI_{\ell}^{(i)}(\pi,\p)^{\epsilon}.$$
\end{Pro}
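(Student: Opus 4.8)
The plan is to show that the connecting homomorphism $\delta$ of \eqref{connect} intertwines the cup-product maps $c^{(i)}_\ell(\pi)^\epsilon_c$ and $c^{(i)}_\ell(\pi)^\epsilon$ up to a sign, so that the two determinants agree. The key point is naturality of the cup product with respect to the boundary map coming from \eqref{divres}. Concretely, the pairing
\begin{align*}
\Ah(\mathfrak{f}_\pi,\{\p\};\Z_p)^{\{\p\}}\times \St^{\cont}_\p(\Q_p)\too \Ah(\mathfrak{f}_\pi,\emptyset;\Z_p)^{\{\p\}}\otimes\Q_p
\end{align*}
induces, after applying $\Hom_\Z(\Div_?(\PP(F)),-)$, a pairing on the ``compactly supported'' level, and the short exact sequence $0\to\Div_0(\PP(F))\to\Div(\PP(F))\to\Z\to 0$ is compatible with these pairings. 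Therefore the square relating the long exact sequence in cohomology (in the $\Ah$-variable) and the cup product with $b_\ell$ commutes up to the sign $(-1)^{?}$ coming from moving the boundary operator past the degree-one class $b_\ell$. This gives a commutative (up to sign) diagram
\begin{center}
\begin{tikzpicture}
    \path 	(0,0) 	node[name=A]{$\MS_c^{i}(\mathfrak{f}_\pi,\{\p\};\Q_p)^{\{\p\},\epsilon}[\pi]$}
		(6,0) 	node[name=B]{$\MS_c^{i+1}(\mathfrak{f}_\pi,\emptyset;\Q_p)^{\{\p\},\epsilon}[\pi]$}
		(0,-1.5) 	node[name=C]{$\MS^{i}(\mathfrak{f}_\pi,\{\p\};\Q_p)^{\{\p\},\epsilon}[\pi]$}
		(6,-1.5) 	node[name=D]{$\MS^{i+1}(\mathfrak{f}_\pi,\emptyset;\Q_p)^{\{\p\},\epsilon}[\pi]$};
    \draw[->] (A) -- (C) node[midway, left]{$\delta$};
    \draw[->] (A) -- (B) node[midway, above]{$c^{(i)}_\ell(\pi)^\epsilon_c$};
    \draw[->] (C) -- (D) node[midway, above]{$c^{(i)}_\ell(\pi)^\epsilon$};
    \draw[->] (B) -- (D) node[midway, right]{$\delta$};
\end{tikzpicture}
\end{center}

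By Proposition \ref{dimensions} \eqref{thirddim}, the vertical map $\delta$ on the source is an isomorphism, and by Proposition \ref{dimensions} \eqref{firstdim} the vertical map $\delta$ on the target is an isomorphism as well (both after passing to $\pi$-isotypic components; note that the target of $c^{(i)}_\ell$ has empty $S_0$, so \eqref{firstdim} applies directly). Both vertical isomorphisms are the same map $\delta$ in the appropriate degrees. Taking $\ell=\ord_\p$ gives the analogous commutative square for $c^{(i)}_{\ord_\p}(\pi)^\epsilon_{?}$, which are isomorphisms by the lemma preceding Definition \ref{defi}. Combining the two squares, we obtain
\begin{align*}
(c^{(i)}_{\ord_\p}(\pi)^\epsilon)^{-1}\circ c^{(i)}_\ell(\pi)^\epsilon = \delta\circ (c^{(i)}_{\ord_\p}(\pi)^\epsilon_c)^{-1}\circ c^{(i)}_\ell(\pi)^\epsilon_c\circ\delta^{-1},
\end{align*}
since the sign from $c_\ell$ and the sign from $c_{\ord_\p}$ cancel in the composite. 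Hence the two endomorphisms are conjugate, and their determinants coincide, which is exactly the assertion.

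The main obstacle I anticipate is a careful bookkeeping of signs and of the identification of the connecting homomorphism for the two different coefficient sequences (one in the $\Ah$-variable giving \eqref{connect}, one in the $\St_\p$-variable giving $b_\ell$) — i.e.\ verifying that the cup product with a class in $\HH^1$ commutes with $\delta$ up to a controlled sign, and that this sign is independent of $\ell$. This is the standard ``graded commutativity of cup products with connecting maps'' and should follow from a diagram chase on the level of the relevant double complex, exactly as in the corresponding argument in Section 6 of \cite{Sp}; the new point here is merely that the cohomological degree $i$ is arbitrary, which does not affect the argument since everything is natural in $i$. One should also note that all maps in sight are $\T$-equivariant, so passing to $\pi$-isotypic components is harmless, and that finite generation (Proposition \ref{FlachundNoethersch}) together with Proposition \ref{dimensions} guarantees the spaces are finite-dimensional so that determinants make sense.
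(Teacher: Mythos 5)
Your proof is correct and follows essentially the same route as the paper: cup product commutes (up to a sign independent of $\ell$) with the connecting homomorphism of \eqref{divres}, giving the same commutative square, and Proposition~\ref{dimensions}\eqref{thirddim} shows the source $\delta$ is an isomorphism. One small inaccuracy: Proposition~\ref{dimensions}\eqref{firstdim} does not directly cover the target $\delta$, since that statement concerns $S=\emptyset$ whereas the target space $\MS_c^{i+1}(\mathfrak{f}_\pi,\emptyset;\Q_p)^{\{\p\},\epsilon}[\pi]$ has $S=\{\p\}$ — but this is harmless, because once the source $\delta$ and both maps $c^{(i)}_{\ord_\p}(\pi)^\epsilon_?$ are known to be isomorphisms the target $\delta$ is forced to be one as well, which is why the paper invokes only part~\eqref{thirddim}.
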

\begin{proof}
Taking cup products and connecting homomorphisms in long exact sequences commute and therefore, the following diagram is commutative:
\begin{center}
\begin{tikzpicture}
    \path 	(0,0) 	node[name=A]{$\MS_c^{i}(\mathfrak{f}_\pi,\left\{\p\right\};\Q_p)^{\left\{\p\right\},\epsilon}[\pi]$}
		(6,0) 	node[name=B]{$\MS_c^{i+1}(\mathfrak{f}_\pi,\emptyset;\Q_p)^{\left\{\p\right\},\epsilon}[\pi]$}
		(0,-1.5) 	node[name=C]{$\MS^{i}(\mathfrak{f}_\pi,\left\{\p\right\};\Q_p)^{\left\{\p\right\},\epsilon}[\pi]$}
		(6,-1.5) 	node[name=D]{$\MS^{i+1}(\mathfrak{f}_\pi,\emptyset;\Q_p)^{\left\{\p\right\},\epsilon}[\pi]$};
    \draw[->] (A) -- (C) node[midway, left]{$\delta$};
    \draw[->] (A) -- (B) node[midway, above]{$c^{(i)}_\ell(\pi)_c^{\epsilon}$};
    \draw[->] (C) -- (D) node[midway, above]{$c^{(i)}_\ell(\pi)^{\epsilon}$};
    \draw[->] (B) -- (D) node[midway, right]{$\delta$};
\end{tikzpicture} 
\end{center}
The claim follows immediately since the first vertical arrow is an isomorphism by Proposition \ref{dimensions} \eqref{thirddim}.
\end{proof}

\begin{Rem}
Previous definitions of automorphic $\LI$-invariants (see for example \cite{D}, \cite{Sp}, \cite{Ge2}) used cohomology with compact support.
This is necessary in order to connect these $\LI$-invariants to derivatives of $p$-adic $L$-functions.
On the other hand, Venkatesh formulated his conjecture for regular cohomology.
Due to the above proposition it does not matter whether we work with cohomology with or without compact support.
\end{Rem}

Let us now formulate our main conjectures on automorphic $\LI$-invariants.

\begin{theConj}
For every continuous homomorphism $\ell\colon F_\p^{\ast}\to \Q_p$ we conjecture that
\begin{enumerate}[(I)]
\item\label{Co1} the following equality holds:
$$\LI_{\ell}^{(i)}(\pi,\p)^{\epsilon}=(\LI_{\ell}^{(0)}(\pi,\p)^{\epsilon})^{\binom{s}{i}},$$
\item\label{Co2} more precisely, the following equality holds:
$$c^{(\ast)}_{\ell}(\pi)^{\epsilon} = \LI_{\ell}^{(0)}(\pi,\p)^{\epsilon} \cdot c^{(\ast)}_{\ord_{\p}}(\pi)^{\epsilon},$$
\item\label{Co3} that the $i$-th $\LI$-invariant $\LI_{\ell}^{(i)}(\pi,\p)^{\epsilon}$ does not dependent on the character $\epsilon$
\item\label{Co4} and more generally, that the following equality holds
$$c^{(\ast)}_{\ell}(\pi)= \LI_{\ell}^{(0)}(\pi,\p)^{\epsilon} \cdot c^{(\ast)}_{\ord_{\p}}(\pi)$$
for one (and thus every) sign character $\epsilon$.
\end{enumerate}
\end{theConj}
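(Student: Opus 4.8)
The statement above is a conjecture; I describe the argument by which it --- more precisely, \eqref{Co1} and \eqref{Co2}, together with a reduction of \eqref{Co3} and \eqref{Co4} to a question about bottom-degree cohomology --- should follow from Venkatesh's conjecture that the $\pi$-isotypic cohomology is generated in minimal degree as a module over the derived Hecke algebra; this is the content of Theorem~\ref{dastheorem}.

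The plan is to equip each of the graded $\pi$-isotypic modules $\MS^{\ast}(\mathfrak{f}_\pi,S_0;\Q_p)^{S,\epsilon}[\pi]$ (for $S_0,S\in\{\emptyset,\{\p\}\}$, and likewise with compact support) with an action of the derived Hecke algebra $\tilde{\mathbb{T}}=\bigoplus_{i\geq 0}\tilde{\mathbb{T}}_i$ satisfying the following ``wish list''. \textbf{(a)} The piece $\tilde{\mathbb{T}}_i$ raises cohomological degree by $i$, and the classical Hecke operators lie in the centre of $\tilde{\mathbb{T}}$, so $\pi$-isotypic components are preserved. \textbf{(b)} The Iwahori-evaluation maps \eqref{Iwahori}, the connecting homomorphisms $\delta$ of \eqref{connect}, and the cup-product maps $c^{(\ast)}_\ell(\pi)^{\epsilon}_{?}$ commute with the $\tilde{\mathbb{T}}$-action up to a sign $\eta(\deg t)\in\{\pm1\}$ depending only on the degree of the operator; and this sign is the same for $c^{(\ast)}_\ell(\pi)^{\epsilon}_{?}$ and for $c^{(\ast)}_{\ord_\p}(\pi)^{\epsilon}_{?}$. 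Point~(a) is essentially Venkatesh's construction, once one checks it is compatible with the Shapiro-lemma and strong-approximation identifications of Proposition~\ref{FlachundNoethersch} and descends to the relevant $\p$-arithmetic groups with coefficients in duals of the Steinberg representation --- using the colimit description of $\p$-arithmetic cohomology, the resolution \eqref{stres}, and the limit of Corollary~\ref{limits} to pass to $\Q_p$-coefficients. Point~(b) holds because all three maps are connecting homomorphisms of short exact sequences of $G(F)$-modules --- namely \eqref{divres}, \eqref{stres}, and (by the argument used to prove that $c^{(i)}_{\ord_\p}(\pi)^\epsilon$ is an isomorphism, in the lemma preceding Definition~\ref{defi}) a global incarnation of \eqref{extseq} --- which are equivariant for the correspondences defining $\tilde{\mathbb{T}}$, and connecting maps commute with such correspondences up to the Koszul sign; since $b_\ell$ and $b_{\ord_\p}$ both raise degree by $1$, the resulting sign is the same for both, which is the decisive point.

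Granting the wish list, fix $i\geq0$ and a sign character $\epsilon$. By Venkatesh's conjecture applied to $\MS^{\ast}(\mathfrak{f}_\pi,\emptyset;\Q_p)^{\epsilon}[\pi]$ and transported through the isomorphism of Proposition~\ref{dimensions} \eqref{seconddim} (which is compatible with the $\tilde{\mathbb{T}}$-action by point~(b)), the space $\MS^i(\mathfrak{f}_\pi,\{\p\};\Q_p)^{\{\p\},\epsilon}[\pi]$ is spanned by elements $t\cdot x$ with $t\in\tilde{\mathbb{T}}_i$ and $x\in\MS^0(\mathfrak{f}_\pi,\{\p\};\Q_p)^{\{\p\},\epsilon}[\pi]$. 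Since the latter space is one-dimensional by Proposition~\ref{dimensions} \eqref{thirddim}, Remark~\ref{onedimensional} gives $c^{(0)}_\ell(\pi)^\epsilon(x)=\LI^{(0)}_\ell(\pi,\p)^\epsilon\cdot c^{(0)}_{\ord_\p}(\pi)^\epsilon(x)$. Writing $\eta=\eta(i)$ for the sign from point~(b) and using $\eta^2=1$,
\begin{align*}
c^{(i)}_\ell(\pi)^\epsilon(t\cdot x)
&=\eta\cdot t\cdot c^{(0)}_\ell(\pi)^\epsilon(x)
=\eta\cdot\LI^{(0)}_\ell(\pi,\p)^\epsilon\cdot t\cdot c^{(0)}_{\ord_\p}(\pi)^\epsilon(x)\\
&=\eta^2\cdot\LI^{(0)}_\ell(\pi,\p)^\epsilon\cdot c^{(i)}_{\ord_\p}(\pi)^\epsilon(t\cdot x)
=\LI^{(0)}_\ell(\pi,\p)^\epsilon\cdot c^{(i)}_{\ord_\p}(\pi)^\epsilon(t\cdot x).
\end{align*}
As such elements span, this proves \eqref{Co2} in degree $i$, hence \eqref{Co2}; therefore $(c^{(i)}_{\ord_\p}(\pi)^\epsilon)^{-1}\circ c^{(i)}_\ell(\pi)^\epsilon=\LI^{(0)}_\ell(\pi,\p)^\epsilon\cdot\id$ on the $\binom{s}{i}$-dimensional space $\MS^i(\mathfrak{f}_\pi,\{\p\};\Q_p)^{\{\p\},\epsilon}[\pi]$, so $\LI^{(i)}_\ell(\pi,\p)^\epsilon=\det(\LI^{(0)}_\ell(\pi,\p)^\epsilon\cdot\id)=(\LI^{(0)}_\ell(\pi,\p)^\epsilon)^{\binom{s}{i}}$, which is \eqref{Co1}. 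Finally, \eqref{Co1} reduces \eqref{Co3} and \eqref{Co4} to the single assertion that $\LI^{(0)}_\ell(\pi,\p)^\epsilon$ is independent of $\epsilon$, a question about the bottom-degree cohomology $\HH^{r+s}$ alone, to be handled by a direct comparison of the $\epsilon$-eigenspaces (for instance via the archimedean component); in the worst case one obtains \eqref{Co1} and \eqref{Co2} unconditionally from the module structure and \eqref{Co3}, \eqref{Co4} under this extra input.

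I expect the main obstacle to lie entirely in the wish list, specifically in points~(a) and~(b): one must genuinely make sense of Venkatesh's derived Hecke operators on the auxiliary $\p$-arithmetic modules $\Ah(\mathfrak{f}_\pi,\{\p\};N)^{\{\p\}}$ with Steinberg coefficients, and prove that they commute, on the nose up to the predicted Koszul sign, with cup product against the extension class $b_\ell$ --- which is not of finite level and lives in the continuous cohomology of the $\p$-arithmetic group. Setting up this action integrally and tracking it through the torsion reduction behind Corollary~\ref{limits}, rather than the final determinant computation, is where the real work lies.
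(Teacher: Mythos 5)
Your proposal reproduces, in outline, the paper's own reduction of the conjecture to Venkatesh's generation hypothesis, as carried out in Main Lemma \ref{ML}, Lemma \ref{reduction}, Lemmas \ref{commutative}--\ref{diagrams}, and Theorem \ref{dastheorem}: the same wish list of properties for a derived Hecke algebra, the same one-dimensionality-plus-Koszul-sign computation that proves (II) and then (I) by taking determinants, the same transport of the generation hypothesis from $\p$-arithmetic to arithmetic cohomology via the Iwahori-evaluation isomorphism, and the same reduction of (III) and (IV) to the sign-independence of the $0$th $\LI$-invariant. The only points of real divergence are that you justify the Koszul-sign commutativity in point~(b) by the slogan that connecting homomorphisms commute with equivariant correspondences up to sign, whereas the paper proves it concretely in Lemma \ref{diagrams} by exhibiting compatible module-level pairings and invoking the standard sign rule for composed cup products (a check that is genuinely needed, since a derived Hecke operator is a restriction--cup--corestriction composite and not a single $G(F)$-equivariant correspondence), and that you do not mention the modified modules $A_n$, $B_n$, $C_n$ of Section~\ref{global} and Remark~\ref{modified}, which is where the integral approximation issue you correctly flag as the real work is actually resolved.
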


Conjecture \eqref{Co1} follows from Conjecture \eqref{Co2} by Proposition \ref{dimensions} \eqref{thirddim}.
Assuming that Conjecture \eqref{Co1} (respectively Conjecture \eqref{Co2}) holds it is enough to check Conjecture \eqref{Co3} for the $0$-th $\LI$-invariant to get the full Conjecture \eqref{Co3} (respectively Conjecture \eqref{Co4}).

\begin{Rem}
The independence of the $0$-th automorphic $\LI$-invariants of the sign character is known in some cases.
In the case $F=\Q$ this was shown by Bertolini, Darmon and Iovita if the automorphic representation admits a certain Jacquet-Langlands transfer (see Theorem 6.8 of \cite{BDI}) and by Breuil in general (see Corollary 5.1.3 of \cite{Br2}).

Let us consider the special homomorphism $\ell=\log_p \circ \No_{F_\p/\Q_p}$.
In \cite{Ge2} it is shown that $\LI_{\ell}^{(0)}(\pi,\p)^{\epsilon}_c$, and therefore $\LI_{\ell}^{(0)}(\pi,\p)^{\epsilon}$, is independent of the sign character $\epsilon$ as long as a certain simultaneous non-vanishing hypotheses for automorphic $L$-functions holds.
If the prime $\p$ is of degree $1$, this implies the independence of the sign character for the $0$-th $\LI$-invariant for an arbitrary homomorphism $\ell$.
\end{Rem}
\subsection{An approximation lemma}\label{Approximation}
Automorphic $\LI$-invariants are constructed using cohomology with characteristic $0$ coefficients.
On the other hand, the derived Hecke algebra is constructed by using cohomology with finite coefficients.
In order to show compatibility between $\LI$-invariants and derived Hecke operators we show that the cohomology classes, which we used to define $\LI$-invariants, can be approximated by classes with $\Z/p^{n}\Z$-coefficients.

Let $\bar{\ell}\colon F_{\p}^{\ast}\to\Z/p^n$ be a locally constant homomorphism.
By taking cup products with the classes $b_{\bar{\ell}}$ we get maps
\begin{align}
c_{\bar{\ell}}^{\epsilon}\colon\MS^{\ast}(\mathfrak{f}_\pi,\left\{\p\right\};\Z/p^n)^{\left\{\p\right\},\epsilon}\xlongrightarrow{\_\cup b_{\bar{\ell}}}\MS^{\ast}(\mathfrak{f}_\pi,\emptyset;\Z/p^n)^{\left\{\p\right\},\epsilon}.
\end{align}
Now, let $\ell\colon F_{\p}^{\ast}\to\Z_p$ be a continuous homomorphism and write $\ell_n\colon F_{\p}^{\ast}\to\Z/p^n$ for its reduction modulo $p^n$.
Then the maps $c_{\ell_n}^{\epsilon}$ form a compatible system and, therefore, they induce a map on projective limits:
\begin{align}\label{projlim}
\varprojlim_n c_{\ell_n}^{\epsilon}\colon\varprojlim_n\MS^{\ast}(\mathfrak{f}_\pi,\left\{\p\right\};\Z/p^n)^{\left\{\p\right\},\epsilon}\too\varprojlim_n \MS^{\ast}(\mathfrak{f}_\pi,\emptyset;\Z/p^n)^{\left\{\p\right\},\epsilon}.
\end{align}
By Corollary \ref{limits} the canonical map
$$\MS^{\ast}(\n,S_0;\Z_p)^{\left\{\p\right\},\epsilon}\too \varprojlim_n \MS^{\ast}(\n,S_0;\Z/p^n)^{\left\{\p\right\},\epsilon}$$
for $S_0=\left\{\p\right\}$ or $S_0=\emptyset$ is an isomorphism.
Thus, by inverting $p$ and invoking Proposition \ref{FlachundNoethersch} \eqref{FN2} the maps \eqref{projlim} induce a homomorphism
$$\varprojlim_n c^{(\ast)}_{\ell_n}(\pi)^{\epsilon}\colon \MS^{(\ast)}(\mathfrak{f}_\pi,\left\{\p\right\};\Q_p)^{\left\{\p\right\},\epsilon}[\pi]\too\MS^{i+1}(\mathfrak{f}_\pi,\emptyset;\Q_p)^{\left\{\p\right\},\epsilon}[\pi].$$
The following lemma follows directly from the definitions.
\begin{Lem}\label{app}
For every continuous character $\ell\colon F_{\p}^{\ast}\to\Z_p$ we have:
$$c^{(\ast)}_{\ell}(\pi)^{\epsilon}=\varprojlim_n c^{(\ast)}_{\ell_n}(\pi)^{\epsilon}.$$
\end{Lem}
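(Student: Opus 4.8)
The plan is to unwind both sides of the claimed identity and check that they agree, the only substantive point being that the extension classes $b_\ell$ and $b_{\ell_n}$ are compatible under reduction of coefficients modulo $p^n$.

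First I would note that for a continuous homomorphism $\ell\colon F_\p^\ast\to\Z_p$ the construction of Section \ref{Steinberg} already produces an \emph{integral} class $b_\ell\in\HH^1(G_\p,\St_\p^{\cont}(\Z_p))$, from which the class used to define $c^{(\ast)}_\ell(\pi)^\epsilon$ is obtained by restriction to $G(F)$ and inverting $p$. Reduction of coefficients along $\Z_p\to\Z/p^n$ carries the exact sequence \eqref{extseq} attached to $(\Z_p,\ell)$ to the one attached to $(\Z/p^n,\ell_n)$: the assignment $(\Phi,r)\mapsto(\Phi\bmod p^n,\,r\bmod p^n)$ defines a $G_\p$-equivariant morphism $\widetilde{\mathcal{E}}(\ell)\to\widetilde{\mathcal{E}}(\ell_n)$ — note that $\Phi\bmod p^n$ is automatically locally constant, hence continuous for the discrete topology on $\Z/p^n$ — which respects the subspaces $\widetilde{\mathcal{E}}(\ell)_0$ and intertwines $\pr^\ast$ on $\St_\p^{\cont}(\Z_p)$ with $\pr^\ast$ on $\St_\p(\Z/p^n)$. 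Functoriality of the connecting homomorphism then shows that $b_\ell$ maps to $b_{\ell_n}$ under $\HH^1(G_\p,\St_\p^{\cont}(\Z_p))\to\HH^1(G_\p,\St_\p(\Z/p^n))$, and likewise after restriction to $G(F)$.

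Next, since the cup product pairings defining $c^{(\ast)}_\ell$ and $c_{\ell_n}^\epsilon$ are all induced by the evaluation pairing and are natural in the coefficient ring, the square relating cup product with $b_\ell$ on $\MS^\ast(\mathfrak{f}_\pi,\{\p\};\Z_p)^{\{\p\},\epsilon}$ to cup product with $b_{\ell_n}$ on $\MS^\ast(\mathfrak{f}_\pi,\{\p\};\Z/p^n)^{\{\p\},\epsilon}$ commutes, the vertical maps being reduction of coefficients. Passing to the inverse limit over $n$, invoking Corollary \ref{limits} to identify $\varprojlim_n\MS^\ast(\,\cdot\,;\Z/p^n)^{\{\p\},\epsilon}$ with $\MS^\ast(\,\cdot\,;\Z_p)^{\{\p\},\epsilon}$, then inverting $p$ and using Proposition \ref{FlachundNoethersch} \eqref{FN2} together with restriction to $\pi$-isotypic components, we obtain precisely the equality $c^{(\ast)}_\ell(\pi)^\epsilon=\varprojlim_n c^{(\ast)}_{\ell_n}(\pi)^\epsilon$. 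Apart from the reduction-compatibility of the $b_\ell$'s recorded in the first step, every ingredient is formal naturality of cup products, connecting maps, and inverse limits; accordingly I do not expect a genuine obstacle here, which matches the fact that the lemma follows directly from the definitions.
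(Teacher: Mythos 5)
Your proposal is correct, and it takes the same route as the paper, which gives no written proof beyond the remark that the lemma ``follows directly from the definitions.'' You supply precisely the expected unwinding: the compatibility $b_\ell \mapsto b_{\ell_n}$ under reduction of coefficients (via the morphism $\widetilde{\mathcal{E}}(\ell)\to\widetilde{\mathcal{E}}(\ell_n)$, with the observation that $\Phi\bmod p^n$ is automatically locally constant), followed by naturality of cup products, Corollary \ref{limits}, and Proposition \ref{FlachundNoethersch} \eqref{FN2}.
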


\section{Derived Hecke algebra}
\subsection{A general strategy}\label{algebra}
In this section we show that the existence of a generalized Hecke algebra fulfilling certain four properties implies Conjectures \eqref{Co1} and \eqref{Co2}.
In subsequent sections we construct a candidate for this algebra using derived Hecke algebras and show that it satisfies the first three properties.
Finally, we show how the crucial fourth property is connected to a conjecture of Venkatesh.

Let $\tilde{\mathbb{T}}=\oplus_{j\geq 0}\tilde{\mathbb{T}}_j$ be a graded $\Q_p$-algebra such that
\begin{enumerate}[(DH1)]
\item\label{propone} There is a ring homomorphism $\mathbb{T}\to \tilde{\mathbb{T}}_0$ such that its image lies in the center of $\tilde{\mathbb{T}}$
\item\label{proptwo} There are graded $\tilde{\mathbb{T}}$-actions on $\MS^{\ast}(\mathfrak{f}_\pi,\left\{\p\right\};\Q_p)^{\left\{\p\right\},\epsilon}$ and $\MS^{\ast}(\mathfrak{f}_\pi,\emptyset;\Q_p)^{\left\{\p\right\},\epsilon}$, which extend the $\mathbb{T}$-action
\end{enumerate}

\noindent Since the image of $\mathbb{T}$ is in the center of $\tilde{\mathbb{T}}$ by property (DH\ref{propone}) the $\tilde{\mathbb{T}}$-actions of property (DH\ref{proptwo}) descend to actions on $\pi$-isotypical components $\MS^{\ast}(\mathfrak{f}_\pi,\left\{\p\right\};\Q_p)^{\left\{\p\right\},\epsilon}[\pi]$ respectively $\MS^{\ast}(\mathfrak{f}_\pi,\emptyset;\Q_p)^{\left\{\p\right\},\epsilon}[\pi]$.
Thus, it makes sense to consider the following two properties:

\begin{enumerate}[(DH1)]\setcounter{enumi}{2}
\item\label{propthree} For every continuous homomorphism $\ell\colon F_\p^{\ast}\to \Q_p$, for all $t\in \tilde{\mathbb{T}}_j$ and all $i$ the diagram
\begin{center}
\begin{tikzpicture}
    \path 	(0,0) 	node[name=A]{$\MS^i(\mathfrak{f}_\pi,\left\{\p\right\};\Q_p)^{\left\{\p\right\},\epsilon}[\pi]$}
		(6,0) 	node[name=B]{$\MS^{i+j}(\mathfrak{f}_\pi,\left\{\p\right\};\Q_p)^{\left\{\p\right\},\epsilon}[\pi]$}
		(0,-1.5) 	node[name=C]{$\MS^{i+1}(\mathfrak{f}_\pi,\emptyset;\Q_p)^{\left\{\p\right\}\epsilon}[\pi]$}
		(6,-1.5) 	node[name=D]{$\MS^{i+j+1}(\mathfrak{f}_\pi,\emptyset;\Q_p)^{\left\{\p\right\},\epsilon}[\pi]$};
    \draw[->] (A) -- (C) node[midway, left]{$c^{(i)}_\ell(\pi)^{\epsilon}$};
    \draw[->] (A) -- (B) node[midway, above]{$t$};
    \draw[->] (C) -- (D) node[midway, above]{$(-1)^j\ t$};
    \draw[->] (B) -- (D) node[midway, right]{$c^{(i+j)}_\ell(\pi)^{\epsilon}$};
\end{tikzpicture} 
\end{center}
is commutative, i.e.~the map $c^{(\ast)}_\ell(\pi)^{\epsilon}$ is a graded $\tilde{\mathbb{T}}$-module homomorphism of degree $1$.
\item\label{propfour} $\MS^{0}(\mathfrak{f}_\pi,\left\{\p\right\};\Q_p)^{\left\{\p\right\},\epsilon}[\pi]$ generates $\MS^{\ast}(\mathfrak{f}_\pi,\left\{\p\right\};\Q_p)^{\left\{\p\right\},\epsilon}[\pi]$ as a $\tilde{\mathbb{T}}$-module.
\end{enumerate}

\begin{MLem}\label{ML}
Suppose there exists a graded $\Q_p$-algebra $\tilde{\mathbb{T}}$ which satisfies properties \emph{(DH\ref{propone}) - (DH\ref{propfour})}.
Then Conjecture \eqref{Co2} (and therefore also Conjecture \eqref{Co1}) holds.
\end{MLem}
\begin{proof}
Let $m$ be an element of $\MS^{i}(\mathfrak{f}_\pi,\left\{\p\right\};\Q_p)^{\left\{\p\right\},\epsilon}[\pi]$.
By property (DH\ref{propfour}) there exists $m_0\in \MS^{0}(\mathfrak{f}_\pi,\left\{\p\right\};\Q_p)^{\left\{\p\right\},\epsilon}[\pi]$ and an operator $t\in \tilde{\mathbb{T}}_i$ such that $t\cdot m_0=m.$

By Remark \ref{onedimensional} and property (DH\ref{propthree}) we get
\begin{align*}
c^{(i)}_\ell(\pi)^{\epsilon}(m)
&= c^{(i)}_\ell(\pi)^{\epsilon}(t\cdot m_0) \\
&= (-1)^{j}\ t\cdot c^{(0)}_\ell(\pi)^{\epsilon}(m_0) \\
&= (-1)^{j}\ t\cdot  \LI_{\ell}^{(0)}(\pi,\p)^{\epsilon} \cdot c^{(0)}_{\ord_\p}(\pi)^{\epsilon}(m_0) \\
&=\LI_{\ell}^{(0)}(\pi,\p)^{\epsilon}\cdot c^{(i)}_{\ord_\p}(\pi)^{\epsilon}(t\cdot m_0) \\
&=\LI_{\ell}^{(0)}(\pi,\p)^{\epsilon}\cdot c^{(i)}_{\ord_\p}(\pi)^{\epsilon}(m),
\end{align*}
which is precisely the statement of Conjecture \eqref{Co2}.
\end{proof}

The subsequent additional property will be used to connect the existence of an algebra as above to conjectures of Venkatesh.
\begin{enumerate}[(DH1)]\setcounter{enumi}{4}
\item\label{propfive} There is a graded $\tilde{\mathbb{T}}$-action on $\MS^{\ast}(\mathfrak{f}_\pi,\emptyset;\Q_p)^{\epsilon}$, which extends the $\mathbb{T}$-action, such for all $t\in \tilde{\mathbb{T}}_j$ and all $i$ the diagram
\begin{center}
\begin{tikzpicture}
    \path 	(0,0) 	node[name=A]{$\MS^i(\mathfrak{f}_\pi,\left\{\p\right\};\Q_p)^{\left\{\p\right\},\epsilon}$}
		(6,0) 	node[name=B]{$\MS^{i+j}(\mathfrak{f}_\pi,\left\{\p\right\};\Q_p)^{\left\{\p\right\},\epsilon}$}
		(0,-1.5) 	node[name=C]{$\MS^i(\mathfrak{f}_\pi,\emptyset;\Q_p)^{\epsilon}$}
		(6,-1.5) 	node[name=D]{$\MS^{i+j}(\mathfrak{f}_\pi,\emptyset;\Q_p)^{\epsilon}$};
    \draw[->] (A) -- (C) node[midway, left]{$\eqref{Iwahori}$};
    \draw[->] (A) -- (B) node[midway, above]{$t$};
    \draw[->] (C) -- (D) node[midway, above]{$t$};
    \draw[->] (B) -- (D) node[midway, right]{$\eqref{Iwahori}$};
\end{tikzpicture} 
\end{center}
is commutative, i.e.~ the evaluation map \eqref{Iwahori} is a graded $\tilde{\mathbb{T}}$-module homomorphism of degree $0$.
\end{enumerate}

The following lemma is an immediate consequence of Proposition \ref{dimensions} \eqref{seconddim}.
\begin{Lem}\label{reduction}
Suppose there exists a graded $\Q_p$-algebra $\tilde{\mathbb{T}}$ which satisfies properties (DH\ref{propone}), (DH\ref{proptwo}) and (DH\ref{propfive}).
Then property (DH\ref{propfour}) is equivalent to the following property:
\begin{enumerate}[(DH1')]\setcounter{enumi}{3}
\item\label{propfourtwo} $\MS^{0}(\mathfrak{f}_\pi,\emptyset;\Q_p)^{\epsilon}[\pi]$ generates $\MS^{\ast}(\mathfrak{f}_\pi,\emptyset;\Q_p)^{\epsilon}[\pi]$ as a $\tilde{\mathbb{T}}$-module.
\end{enumerate}
\end{Lem}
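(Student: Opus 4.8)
The plan is to deduce Lemma \ref{reduction} directly from Proposition \ref{dimensions} \eqref{seconddim}, which states that the evaluation map \eqref{Iwahori} induces an isomorphism
$$\MS^{i}(\mathfrak{f}_\pi,\left\{\p\right\};\Q_p)^{\left\{\p\right\},\epsilon}[\pi]\xlongrightarrow{\sim} \MS^{i}(\mathfrak{f}_\pi,\emptyset;\Q_p)^{\epsilon}[\pi]$$
for every $i\geq 0$. Under properties (DH\ref{propone}), (DH\ref{proptwo}) and (DH\ref{propfive}), passing to $\pi$-isotypic components (legitimate since $\mathbb{T}$ maps to the centre of $\tilde{\mathbb{T}}$) gives an isomorphism of graded $\tilde{\mathbb{T}}$-modules
$$\MS^{\ast}(\mathfrak{f}_\pi,\left\{\p\right\};\Q_p)^{\left\{\p\right\},\epsilon}[\pi]\xlongrightarrow{\sim} \MS^{\ast}(\mathfrak{f}_\pi,\emptyset;\Q_p)^{\epsilon}[\pi];$$
it is $\tilde{\mathbb{T}}$-linear by the commutative square in (DH\ref{propfive}) and degree-preserving by the same diagram. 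This is the only genuine input needed.

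Now I would argue that an isomorphism of graded modules carries generating sets to generating sets. Concretely, suppose (DH\ref{propfour}) holds, so $\MS^{0}(\mathfrak{f}_\pi,\left\{\p\right\};\Q_p)^{\left\{\p\right\},\epsilon}[\pi]$ generates the $\p$-arithmetic side as a $\tilde{\mathbb{T}}$-module. Applying the isomorphism above, its image — which by degree-preservation is exactly $\MS^{0}(\mathfrak{f}_\pi,\emptyset;\Q_p)^{\epsilon}[\pi]$, since \eqref{Iwahori} in degree $0$ is an isomorphism — generates the arithmetic side; that is (DH\ref{propfourtwo}'). Conversely, starting from (DH\ref{propfourtwo}') and applying the inverse isomorphism (also $\tilde{\mathbb{T}}$-linear and degree $0$) yields (DH\ref{propfour}). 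Thus the two properties are equivalent.

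There is essentially no obstacle here: the content is entirely packaged in Proposition \ref{dimensions} \eqref{seconddim} and in property (DH\ref{propfive}), and the remaining step is the formal observation that a degree-$0$ isomorphism of graded modules over a graded ring identifies the submodule generated by the degree-$0$ part on each side. The only point requiring a word of care is that one must use property (DH\ref{propfive}) — not merely (DH\ref{proptwo}) — so that the evaluation map is known to be $\tilde{\mathbb{T}}$-equivariant and not just $\mathbb{T}$-equivariant; without equivariance the image of a $\tilde{\mathbb{T}}$-generating set need not be a $\tilde{\mathbb{T}}$-generating set. I would phrase the proof in one or two sentences, citing Proposition \ref{dimensions} \eqref{seconddim} for the isomorphism and (DH\ref{propfive}) for equivariance, and leave the module-theoretic triviality implicit.
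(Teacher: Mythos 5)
Your argument is correct and matches the paper exactly: the paper simply states that the lemma is an immediate consequence of Proposition \ref{dimensions} \eqref{seconddim}, which is precisely the observation you make explicit (that the evaluation map, known to be $\tilde{\mathbb{T}}$-equivariant of degree $0$ by (DH\ref{propfive}) and an isomorphism on $\pi$-isotypic components by Proposition \ref{dimensions} \eqref{seconddim}, identifies the submodule generated in degree $0$ on each side). Your added remark about needing (DH\ref{propfive}) rather than just (DH\ref{proptwo}) for $\tilde{\mathbb{T}}$-equivariance is the right point of care.
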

Note that property (DH\ref{propfourtwo}') is a statement purely about the cohomology of arithmetic groups. There are no $\p$-arithmetic groups involved.

\begin{Rem}\label{modified}
Actually, to ensure that the algebra we construct is as large as possible we will work with slightly different modules (see Section \ref{global}).
But in order to better illustrate the argument we neglected this detail in the above exposition.
\end{Rem}

\subsection{Local derived Hecke algebra}\label{local}
We recall local derived Hecke algebras and their action on various cohomology groups.
We follow largely the exposition of Venkatesh (cf.~Section 2 of \cite{Ve}) although we rewrite it in terms of group cohomology with values in adelic function spaces.

Let $\q$ be a prime of $F$, which does not divide $p$, and $n\geq 1$ an integer.
We work in the category $\mathfrak{C}(\Z/p^n)$ of smooth $G_\q$-representations with $\Z/p^n$-coefficients.
A $\Z/p^n[G_\q]$-representation $M$ is called smooth if the stabilizer of every $m\in M$ is open in $G_\q$.

\begin{Def}
The (spherical) derived Hecke algebra at $\q$ with $\Z/p^n$-coefficients is the graded algebra
$$\Hec_{\q,\Z/p^n}=\Hec(G_\q,K_\q)_{\Z/p^n}=\Ext^{\ast}_{\mathfrak{C}(\Z/p^n)}(\Z/p^n[G_\q/K_\q],\Z/p^n[G_\q/K_q]).$$
\end{Def}

By Frobenius reciprocity we have a canonical isomorphism
\begin{align*}
\Hom_{\mathfrak{C}(\Z/p^n)}(\Z/p^n[G_\q/K_\q],\Z/p^n[G_\q/K_q])
&\xrightarrow{\cong}\Z/p^n[G_\q/K_\q]^{K\q}\\
&\xrightarrow{\cong}\Z/p^n[K_\q\backslash G_\q/K_\q].
\end{align*}
So we see that the subalgebra of $\Hec_{\q,\Z/p^n}$ of elements of degree $0$ is the classical spherical Hecke algebra of $G_\q$ with $\Z/p^n$-coefficients.

Although the above definition of the derived Hecke algebra is the conceptional most satisfying one it is easier to define its action on various cohomology groups with a description in terms of double cosets:
for a coset $x\in G_\q/K_\q$ with representative $g_x\in G_\q$ we put $K_{\q,x}=K_\q \cap g_x K_\q g_x^{-1}$.
We fix a set of representatives $[K_\q\backslash G_\q/K_\q]$ of the $K_\q$-orbits of its left action on $G_\q/K_\q$.
There is an isomorphism of graded $\Z/p^n$-modules
\begin{align}\label{heckeiso}
\bigoplus_{x\in [K_\q\backslash G_\q/K_\q]}\hspace{-2em} H^{\ast}(K_{\q,x},\Z/p^n)\too \Hec_{\q,\Z/p^n},
\end{align}
where on the left hand side we are considering continuous group cohomology of the profinite group $K_{\q,x}$.
(For a detailed discussion of this isomorphism see Section 2.3 and 2.4 of \cite{Ve}.)
Thus, we can view an element of the derived Hecke algebra as a tuple $(x,\alpha)$ with $x\in G_\q/K_\q$ and $\alpha \in H^{\ast}(K_{\q,x},\Z/p^n)$.

Let $S_0\subseteq S$ be finite sets of finite places, which do not contain the prime $\q$, and let $\n\subseteq \mathcal{O}$ be a non-zero ideal such that $\q$ does not divide $\n$.
In the following we are going to construct the Hecke operator
\begin{align*}
h_{x,\alpha}\colon \MS^{\ast}(\n,S_0;\Z/p^n)^{S,\epsilon}\too \MS^{\ast}(\n,S_0;\Z/p^n)^{S,\epsilon}
\end{align*}
associated to a tuple $(x,\alpha)$ as above.

Let $K_0(\n)_x^S\subseteq G(\A\sinfty)$ be the compact, open subgroup given by the product
\begin{align*}
K_0(\n)_x^S=K_{\q,x}\times K_0(\n)^{S\cup \left\{\q\right\}}.
\end{align*}
We have the following chain of maps
\begin{align*}
H^{\ast}(K_{\q,x},\Z/p^n)&\too H^{\ast}(K_0(\n)_x^S,\Z/p^n)\\
&\xlongrightarrow{\cong} H^{\ast}(G(\A\sinfty),\Ah(K_0(\n)_x^S,\emptyset;\Z/p^n)^{S})\\
&\too \MS^{i}(K_0(\n)_x^S,\emptyset;\Z/p^n)^{S},
\end{align*}
where the first arrow is given by inflation, the second by Shapiro's Lemma and the third by restriction.
By abuse of notation we denote the image of $\alpha$ under the above chain of maps also by $\alpha$.

The operator $h_{x,\alpha}$ is defined via the composition of the following three homomorphisms:
Firstly, the natural inclusion
\begin{align}\label{mapone}
\Ah(K_0(\n)^S,S_0;\Z/p^n)^{S}\too \Ah(K_0(\n)_x^S,S_0;\Z/p^n)^{S}
\end{align}
yields the restriction map
\begin{align*}
\res_x\colon \MS^{\ast}(\n,S_0;\Z/p^n)^{S,\epsilon}\too \MS^{\ast}(K_0(\n)_x^S,S_0;\Z/p^n)^{S,\epsilon}.
\end{align*}
Secondly, the bilinear pairing
\begin{align}\label{maptwo}
\Ah(K_0(\n)_x^S,S_0;\Z/p^n)^{S}\times \Ah(K_0(\n)_x^S,\emptyset;\Z/p^n)^{S}\to \Ah(K_0(\n)_x^S,S_0;\Z/p^n)^{S}
\end{align}
given by pointwise multiplication induces a cup product pairing on cohomology.
Therefore, taking the cup product with $\alpha$ yields a map
\begin{align*}
\MS^{\ast}(K_0(\n)_x^S,S_0;\Z/p^n)^{S,\epsilon}\xlongrightarrow{\cup\alpha} \MS^{\ast}(K_0(\n)_x^S,S_0;\Z/p^n)^{S,\epsilon}.
\end{align*}
Finally, summing over right translations by coset representatives of $K_0(\n)_x^S\backslash K_0(\n)^S$ gives a map
\begin{align}\label{mapthree}
\Ah(K_0(\n)_x^S,S_0;\Z/p^n)^{S}\too \Ah(K_0(\n)^S,S_0;\Z/p^n)^{S},
\end{align}
which in turn induces the corestriction homomorphism
\begin{align*}
\cores_x\colon \MS^{\ast}(K_0(\n)_x^S,S_0;\Z/p^n)^{S,\epsilon}\too \MS^{\ast}(\n,S_0;\Z/p^n)^{S,\epsilon}.
\end{align*}

Note that this action agrees with the usual Hecke action on elements of degree $0$.
We do not check that this indeed defines an action of the local derived Hecke algebra, i.e.~that the action is compatible with multiplication.
This has been done for the cohomology of arithmetic groups with trivial coefficients (the case $S_0=S=\emptyset$) in \cite{Ve}.
The arguments carry over to the more general case easily.
Alternatively, we could simply work with the algebra generated by all endomorphisms $h_{x,\alpha}$.

\begin{Lem}\label{commutative} [DH\ref{propone}]
Let $\q^{\prime}$ be a prime of $F$, which does not divide $p\n$ and is not contained in $S$.
Then the operators $h_{x,\alpha}$ and $T_{\q^{\prime}}$ commute.
Note that the case $\q^{\prime}=\q$ is allowed.
\end{Lem}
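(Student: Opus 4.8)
The plan is to show that the derived Hecke operator $h_{x,\alpha}$ at $\q$, built out of restriction, cup product with a cohomology class, and corestriction, commutes with the classical Hecke operator $T_{\q'}$ because the latter is implemented by correspondences at a \emph{disjoint} place (when $\q' \neq \q$), or by genuinely commuting double-coset operators (when $\q' = \q$, since the classical spherical Hecke algebra is commutative and lies in the center of the derived Hecke algebra by the discussion preceding the lemma). So first I would separate the two cases.

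For the case $\q' \neq \q$, the operator $T_{\q'}$ is defined using coset representatives for a double coset in $K_{\q'}\backslash G_{\q'}/K_{\q'}$, acting on the $\q'$-component of $G(\A\sinfty)$, while $h_{x,\alpha}$ only modifies the level structure and the functions at the place $\q$ (via passing to the subgroup $K_0(\n)_x^S$, whose component at $\q$ is $K_{\q,x}$ and which is unchanged at $\q'$). The key point is that all three building blocks of $h_{x,\alpha}$ — the restriction map \eqref{mapone}, the cup-product with $\alpha$ (where $\alpha$ is pulled back from $H^\ast(K_{\q,x},\Z/p^n)$ and hence is "supported at $\q$"), and the corestriction \eqref{mapthree} — are built from $G(\A^{S\cup\{\q\},\infty})$-equivariant maps of coefficient modules, and in particular are equivariant for the local action at $\q'$. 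Since $T_{\q'}$ is induced by a morphism of coefficient systems that is equivariant for the local action at $\q$ (indeed it does nothing at $\q$), the two commute: concretely, restriction commutes with $T_{\q'}$ because the inclusion \eqref{mapone} is a map of $G_{\q'}$-modules; cup product with $\alpha$ commutes with $T_{\q'}$ because $\alpha$ comes from cohomology of $K_{\q,x}$ and the pairing \eqref{maptwo} is $G_{\q'}$-equivariant, so one uses the projection formula / compatibility of cup products with correspondences; and corestriction commutes with $T_{\q'}$ for the same reason as restriction. I would assemble these into the statement that $h_{x,\alpha}\circ T_{\q'} = T_{\q'}\circ h_{x,\alpha}$.

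For the case $\q' = \q$, I would instead observe that $T_{\q}$, as an element of degree $0$ in $\Hec_{\q,\Z/p^n}$, lies in the classical spherical Hecke algebra $\Z/p^n[K_\q\backslash G_\q/K_\q]$, which is commutative; and since the action of the whole local derived Hecke algebra on $\MS^\ast(\n,S_0;\Z/p^n)^{S,\epsilon}$ is by assumption a ring action (or, in the safe variant, we work with the algebra generated by the $h_{x,\alpha}$ together with the classical Hecke operators and invoke Venkatesh's verification in \cite{Ve} that the degree-$0$ part is central), the operator $T_\q = h_{(1\cdot K_\q + \dots), 1}$ is a sum of degree-$0$ double coset operators that commute with every $h_{x,\alpha}$. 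Alternatively, and more self-containedly, one can check directly that the unnormalized double-coset correspondences at $\q$ commute: this is the standard computation that the $U$- or $T$-operator at $\q$ commutes with any cohomological correspondence coming from a subgroup between $K_{\q,x}$-level structures, using the disjointness of double cosets and counting coset representatives.

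The main obstacle is the $\q' = \q$ case together with being careful about \emph{normalizations} and whether one has genuinely verified associativity/centrality of the derived Hecke action, rather than just defined the operators $h_{x,\alpha}$; since the excerpt explicitly sidesteps the full multiplicativity check ("Alternatively, we could simply work with the algebra generated by all endomorphisms $h_{x,\alpha}$"), I would phrase the proof so that it only uses the concrete double-coset description and a direct manipulation of coset representatives, reducing everything to the compatibility of restriction, corestriction, and cup product with the Hecke correspondence at $\q'$ — a formal diagram chase once the relevant maps of coefficient modules are identified as equivariant. The bookkeeping with the indexing sets $[K_\q\backslash G_\q/K_\q]$ and $K_0(\n)_x^S\backslash K_0(\n)^S$ is the only genuinely fiddly part, and I expect it to go through exactly as in Section~2 of \cite{Ve} with the place $\q'$ playing a purely spectator role.
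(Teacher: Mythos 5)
Your proposal is essentially the paper's proof: the paper cites Lemma~2.8 of \cite{Ve} for the case $S_0 = S = \emptyset$ and observes that the same argument works verbatim in the present setting, which is precisely your case split (spectator-place equivariance for $\q' \neq \q$, and the double-coset computation showing centrality of the degree-$0$ part for $\q' = \q$). The only caution: in the $\q' = \q$ case, commutativity of the classical spherical Hecke algebra together with merely ``having a ring action'' does not by itself force $T_\q$ to commute with positive-degree operators --- one genuinely needs the centrality of the degree-$0$ subalgebra established by the explicit coset manipulation in \cite{Ve}, which is the fall-back you correctly identify but should be your primary argument rather than an aside.
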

\begin{proof}
In the case of cohomology of arithmetic groups, i.e.~$S_0=S=\emptyset$, this is Lemma 2.8 of \cite{Ve}.
The same proof works in our more general setup.
\end{proof}

The following lemma is the essential ingredient for proving that the global derived Hecke algebra (see Section \ref{global}) fulfills most of the sought-after properties of Section \ref{algebra}.
\begin{Lem}\label{diagrams}
Suppose that $\q$ does not divide the conductor $\mathfrak{f}_\pi$ of $\pi$ and let $(x,\alpha)\in\Hec_{\q,\Z/p^n}$ be an element of degree $i$.
\begin{enumerate}[(a)]
\item\label{diagram1}[DH\ref{propthree}] Let $\bar{\ell}\colon F_{\p}^{\ast}\to\Z/p^n$ be a locally constant character.
Then, we have
$$c_{\bar{\ell}}^{\epsilon}(h_{x,\alpha}(m))= (-1)^{i} h_{x,\alpha}(c_{\bar{\ell}}^{\epsilon}(m))$$
for all $m\in \MS^{\ast}(\mathfrak{f}_\pi,\left\{\p\right\};\Z/p^n)^{\left\{\p\right\}}$.
\item\label{diagram2}[DH\ref{propfive}] The evaluation maps \eqref{Iwahori} with $R=\Z/p^n$ commute with the action of $h_{x,\alpha}$.
\end{enumerate}
\end{Lem}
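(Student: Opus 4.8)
\textbf{Proof plan for Lemma \ref{diagrams}.}

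The plan is to reduce both statements to the compatibility of cup products with the various functorial maps (restriction, corestriction, inflation, Shapiro isomorphisms, connecting maps) that enter the definition of $h_{x,\alpha}$. The operator $h_{x,\alpha}$ is by construction the composite $\cores_x\circ(\_\cup\alpha)\circ\res_x$, where $\alpha$ is the image under inflation--Shapiro--restriction of a class in $H^{i}(K_{\q,x},\Z/p^n)$. All the maps defining $c_{\bar\ell}^{\epsilon}$ (for part \eqref{diagram1}) and the evaluation map \eqref{Iwahori} (for part \eqref{diagram2}) are induced by $G(\A\sinfty)$-equivariant morphisms of the coefficient modules $\Ah(-,-;\Z/p^n)^{S}$ that are \emph{the identity on the relevant adelic component} (the pairing with $\St^{\cont}_\p$, respectively evaluation at an Iwahori-fixed vector, only touches the place $\p$, which is disjoint from $\q$). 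So the whole argument is a formal diagram chase in the cohomology of $G(F)$ with varying coefficients, using two standard facts: (i) restriction, corestriction and inflation are functorial in the coefficient module and commute with any cup product by a class pulled back appropriately; (ii) the sign $(-1)^{i}$ is exactly the Koszul sign picked up when commuting a degree-$i$ class $\alpha$ past the degree-$1$ class $b_{\bar\ell}$ in a cup product, i.e.~$x\cup(\alpha\cup y)=(-1)^{i}\alpha\cup(x\cup y)$ after reindexing.

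For part \eqref{diagram2} I would argue as follows. The evaluation map \eqref{Iwahori} is induced by the $G(\A\sinfty)$-module map $\Hom_\Z(\St_{\{\p\}},\Z/p^n)\to\Z/p^n$, $\phi\mapsto\phi(\xi_\p)$, given by a normalized Iwahori-fixed vector $\xi_\p\in\St_\p$; since this map only involves the $\p$-component and $\p\neq\q$, it commutes with $\res_x$, with the cup product against $\alpha$ (which lives entirely in the $\q$-part of the coefficients and in the level group at $\q$), and with $\cores_x$. Hence it commutes with the composite $h_{x,\alpha}$, with no sign, proving \eqref{diagram2}. Concretely one checks this on the three constituent maps \eqref{mapone}, \eqref{maptwo}, \eqref{mapthree} of the construction, each of which is compatible with the evaluation morphism because the latter is functorial and affects a disjoint set of places.

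For part \eqref{diagram1} the same strategy applies but one must track the sign. The map $c_{\bar\ell}^{\epsilon}$ is cup product with $b_{\bar\ell}\in H^{1}(G(F),\St^{\cont}_\p(\Z/p^n))$ followed by the pairing $\Ah(\n,\{\p\};\Z/p^n)^{\{\p\}}\times\St^{\cont}_\p\to\Ah(\n,\emptyset;\Z/p^n)^{\{\p\}}$, which again is the identity away from $\p$. So for $m\in\MS^{*}$ one expands $h_{x,\alpha}(m)=\cores_x(\alpha\cup\res_x m)$ and applies $c_{\bar\ell}^{\epsilon}$; using that $\cores_x$, $\res_x$ and the $\p$-adic pairing all commute with cup product against $b_{\bar\ell}$ (projection formula for cores, naturality for res, associativity/commutativity of cup products for the pairing), one moves $b_{\bar\ell}$ inside and gets $\cores_x\big(\alpha\cup(b_{\bar\ell}\cup\res_x m)\big)$ up to the graded-commutativity sign $(-1)^{\deg\alpha\cdot\deg b_{\bar\ell}}=(-1)^{i}$ incurred when interchanging $\alpha$ (degree $i$) and $b_{\bar\ell}$ (degree $1$); the right-hand side is exactly $(-1)^{i}h_{x,\alpha}(c_{\bar\ell}^{\epsilon}(m))$.

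The main obstacle is bookkeeping rather than substance: one has to verify that each of the three maps \eqref{mapone}--\eqref{mapthree} in the construction of $h_{x,\alpha}$, together with the inflation/Shapiro steps producing $\alpha$, genuinely commutes with the $\p$-local pairing and with cup product by $b_{\bar\ell}$, and that the sign $(-1)^{i}$ is picked up exactly once (in the graded-commutativity step) and not, e.g., spuriously reintroduced by the connecting map in \eqref{divres} or by the $\epsilon$-twist. Since $\p\notin S\cup\{\q\}$ and $b_{\bar\ell}$ is pulled back from $G_\p$ while $\alpha$ is supported at $\q$, the two classes interact only through the associativity and graded-commutativity of the cup product on $H^{*}(G(F),-)$, so all the compatibilities hold; the proof is therefore a careful but routine diagram chase, for which one may cite the analogous verifications in Section 2 of \cite{Ve} and in \cite{Sp}.
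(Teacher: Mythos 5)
Your proposal is correct and follows essentially the same route as the paper: both proofs reduce to checking that the three constituent maps \eqref{mapone}, \eqref{maptwo}, \eqref{mapthree} of $h_{x,\alpha}$ commute with the $\p$-local pairing \eqref{pairing} (because $\p\neq\q$), and that the only sign arises from interchanging the degree-$i$ class $\alpha$ with the degree-$1$ class $b_{\bar\ell}$ in the cup product, for which the paper cites \cite{KB}, Chapter V, (3.6). The paper phrases the restriction/corestriction compatibilities as explicit commutative diagrams of coefficient modules rather than invoking ``naturality'' and ``projection formula'' by name, but this is a cosmetic rather than substantive difference.
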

\begin{proof}
\eqref{diagram1}:
Let us consider the natural pairing
\begin{align}\label{pairing}
\Ah(K,\left\{\p\right\};\Z/p^n)^{\left\{\p\right\}}\times \St_{\p}(\Z/p^n) \too 
\Ah(K,\emptyset;\Z/p^n)^{\left\{\p\right\}}
\end{align}
for any compact, open subgroup $K\subseteq G(\A^{\left\{\p\right\},\infty})$.
The diagrams
\begin{center}
\begin{tikzpicture}
    \path 	(0,0) 	node[name=A]{$\Ah(K_0(\mathfrak{f}_\pi)^{\left\{\p\right\}},\left\{\p\right\};\Z/p^n)^{\left\{\p\right\}}\times \St_\p(\Z/p^n)$}
		(6,0) 	node[name=B]{$\Ah(K_0(\mathfrak{f}_\pi)^{\left\{\p\right\}},\emptyset;\Z/p^n)^{\left\{\p\right\}}$}
		(0,-1.5) 	node[name=C]{$\Ah(K_0(\mathfrak{f}_\pi)_x^{\left\{\p\right\}},\left\{\p\right\};\Z/p^n)^{\left\{\p\right\}}\times\St_\p(\Z/p^n)$}
		(6,-1.5) 	node[name=D]{$\Ah(K_0(\mathfrak{f}_\pi)_x^{\left\{\p\right\}},\emptyset;\Z/p^n)^{\left\{\p\right\}}$};
    \draw[->] (A) -- (C) node[midway, left]{\eqref{mapone}};
    \draw[->] (A) -- (B) node[midway, above]{\eqref{pairing}};
    \draw[->] (C) -- (D) node[midway, above]{\eqref{pairing}};
    \draw[->] (B) -- (D) node[midway, right]{\eqref{mapone}};
\end{tikzpicture} 
\end{center}
and
\begin{center}
\begin{tikzpicture}
    \path 	(0,0) 	node[name=A]{$\Ah(K_0(\mathfrak{f}_\pi)_x^{\left\{\p\right\}},\left\{\p\right\};\Z/p^n)^{\left\{\p\right\}}\times \St_\p(\Z/p^n)$}
		(6,0) 	node[name=B]{$\Ah(K_0(\mathfrak{f}_\pi)_x^{\left\{\p\right\}},\emptyset;\Z/p^n)^{\left\{\p\right\}}$}
		(0,-1.5) 	node[name=C]{$\Ah(K_0(\mathfrak{f}_\pi)^{\left\{\p\right\}},\left\{\p\right\};\Z/p^n)^{\left\{\p\right\}}\times\St_\p(\Z/p^n)$}
		(6,-1.5) 	node[name=D]{$\Ah(K_0(\mathfrak{f}_\pi)^{\left\{\p\right\}},\emptyset;\Z/p^n)^{\left\{\p\right\}}$};
    \draw[->] (A) -- (C) node[midway, left]{\eqref{mapthree}};
    \draw[->] (A) -- (B) node[midway, above]{\eqref{pairing}};
    \draw[->] (C) -- (D) node[midway, above]{\eqref{pairing}};
    \draw[->] (B) -- (D) node[midway, right]{\eqref{mapthree}};
\end{tikzpicture} 
\end{center}
are obviously commutative.
Hence, the corestriction and restriction maps in cohomology commute with taking cup product with $b_{\bar{\ell}}$.

Since the two compositions
\begin{align*}
&\Ah(K_0(\mathfrak{f}_\pi)_x^{\left\{\p\right\}},\left\{\p\right\};\Z/p^n)^{\left\{\p\right\}} \times \Ah(K_0(\mathfrak{f}_\pi)_x^{\left\{\p\right\}},\emptyset;\Z/p^n)^{\left\{\p\right\}}\times \St_\p(\Z/p^n) \\
\xlongrightarrow{\eqref{maptwo}\times \id}& \Ah(K_0(\mathfrak{f}_\pi)_x^{\left\{\p\right\}},\left\{\p\right\};\Z/p^n)^{\left\{\p\right\}}\times \St_\p(\Z/p^n) \\
\xrightarrow{\eqref{pairing}}& \Ah(K_0(\mathfrak{f}_\pi)_x^{\left\{\p\right\}},\emptyset;\Z/p^n)^{\left\{\p\right\}}
\end{align*}
and
\begin{align*}
&\Ah(K_0(\mathfrak{f}_\pi)_x^{\left\{\p\right\}},\left\{\p\right\};\Z/p^n)^{\left\{\p\right\}} \times \Ah(K_0(\mathfrak{f}_\pi)_x^{\left\{\p\right\}},\emptyset;\Z/p^n)^{\left\{\p\right\}}\times \St_\p(\Z/p^n) \\
\xlongrightarrow{\cong}& \Ah(K_0(\mathfrak{f}_\pi)_x^{\left\{\p\right\}},\left\{\p\right\};\Z/p^n)^{\left\{\p\right\}} \times \St_\p(\Z/p^n) \times \Ah(K_0(\mathfrak{f}_\pi)_x^{\left\{\p\right\}},\emptyset;\Z/p^n)^{\left\{\p\right\}}\\
\xlongrightarrow{\eqref{pairing}\times \id}&\Ah(K_0(\mathfrak{f}_\pi)_x^{\left\{\p\right\}},\emptyset;\Z/p^n)^{\left\{\p\right\}} \times \Ah(K_0(\mathfrak{f}_\pi)_x^{\left\{\p\right\}},\emptyset;\Z/p^n)^{\left\{\p\right\}}\\
\xrightarrow{\eqref{maptwo}}& \Ah(K_0(\mathfrak{f}_\pi)_x^{\left\{\p\right\}},\emptyset;\Z/p^n)^{\left\{\p\right\}}
\end{align*}
agree we see by standard arguments that taking cup product with $b_{\bar{\ell}}$ respectively $\alpha$ commute up to a sign (see for example (3.6) in Chapter V of \cite{KB}).
Hence, the claim follows.

\noindent Part \eqref{diagram2} follows by a similar argument.

\end{proof}

\subsection{Global derived Hecke algebra}\label{global}
In the following we will patch together various local Hecke algebras to construct a candidate for the algebra $\tilde{\mathbb{T}}$.
As noted in \cite{Ve} the main issue is that the natural reduction maps
$$\Hec_{\q,\Z/p^{n+1}}\too \Hec_{\q,\Z/p^n}$$
are in general far away from being surjective.
Following Venkatesh we consider as a first step the subalgebra of the endomorphism ring of a certain cohomology group generated by the image of the action of all local derived Hecke algebras.
Secondly we take the projective limit over those subalgebras.

Let us first introduce the modules on which the global derived Hecke algebra acts.
Remember that for a prime $q$, that does not divide the conductor of $\pi$, we write $\lambda_\q\in \Z$ for the eigenvalue of $T_\q$ on a spherical vector of $\pi_\q$.
Let $\m\subseteq \mathbb{T}$ be the kernel of the homomorphism
$$\mathbb{T}\too \Z/p,\ T_\q \mapstoo \lambda_\q \bmod p.$$
Firstly, we put
$$A_n= \MS^{\ast}(\mathfrak{f}_\pi,\emptyset;\Z/p^n)^{\epsilon}_{\m}.$$
Here, the subscript $\m$ denotes localization.
Secondly, we define
$$B_n^{\prime}=\MS^{\ast}(\mathfrak{f}_\pi,\left\{\p\right\};\Z/p^n)^{\left\{\p\right\},\epsilon}_\m$$
and
$$B_n=B_n^{\prime}/ \ker(\ev_n),$$
where $\ev_n$ denotes the map
$B_n\xrightarrow{\eqref{Iwahori}} A_n$
induced by evaluation at an Iwahori-fixed vector.
We put
$$C_n^{\prime}=\sum_{\bar{\ell}}c_{\bar{\ell}}^{\epsilon}(B_n^{\prime})$$
where the sum runs over all locally constant homomorphism $\bar{\ell}\colon F_\p^{\ast}\to \Z/p^n$ and $c_{\bar{\ell}}^{\epsilon}$ is the (localization at $\m$ of the) map constructed in Section \ref{Approximation}.
Finally, we define
$$C_n=C_n/\sum_{\bar{\ell}}c_{\bar{\ell}}^{\epsilon}(\ker{\ev_n}).$$
Thus, we have that $\ev_n$ induces an injective map
$$\ev_n\colon B_n\too A_n$$
and every locally constant homomorphism $\bar{\ell}\colon F_\p^{\ast}\to \Z/p^n$ induces a map
$$c_{\bar{\ell}}^{\epsilon}\colon B_n\too C_n.$$

%Let $\q$ be a prime of $F$, which is co-prime to $p$ and the conductor of $\pi$ and which fulfills $\mathcal{N}(\q)\equiv 1 \bmod p$.
%Thus, the local derived Hecke algebra $\Hec_{\q,\Z/p^n}$ is graded-commutative.

Lemma \ref{commutative} and Lemma \ref{diagrams} imply that the actions of $\Hec_{\q,\Z/p^n}$ constructed in the previous section descends to actions on the modules $A_n$, $B_n$ and $C_n$.
Furthermore, the map $\ev_n$ is a $\Hec_{\q,\Z/p^n}$-homomorphism of degree $0$ and the maps $c_{\bar{\ell}}^{\epsilon}$ are $\Hec_{\q,\Z/p^n}$-homomorphism of degree $1$.

Let $\tilde{\mathbb{T}}^{\ast}_{\Z/p^n}\subseteq \End(A_n)$ be the graded algebra generated by the actions of $\Hec_{\q,\Z/p^n}$ for all primes $\q$ of $F$ which are co-prime to $p$ and the conductor of $\pi$.
There are natural reduction maps $\tilde{\mathbb{T}}^{\ast}_{\Z/p^n}\to \tilde{\mathbb{T}}^{\ast}_{\Z/p^m}$ for $n\geq m$.
\begin{Def}
The global derived Hecke algebra is the graded $\Q_p$-algebra
$$\tilde{\mathbb{T}}=\varprojlim_n \tilde{\mathbb{T}}^{\ast}_{\Z/p^n}\otimes \Q_p.$$
\end{Def}
%\begin{Rem}\label{larger}
%Note that the algebra defined above is smaller than the one considered by Venkatesh in \cite{Ve} since we are only using Hecke operators at primes for which derived Satake holds.
%But the arguments in \textit{loc.cit.}~suggest that it should be enough to work with this smaller algebra.
%\end{Rem}

We put $A_\infty=\varprojlim_n A_n \otimes \Q_\p$ and similarly for $B_\infty$ and $C_\infty$.
These are the modified modules mentioned in Remark \ref{modified}.
There are canonical isomorphisms
\begin{align*}
A_\infty[\pi]&\cong \MS^{\ast}(\mathfrak{f}_\pi,\emptyset;\Q_p)^{\epsilon}[\pi],\\
B_\infty[\pi]&\cong \MS^{\ast}(\mathfrak{f}_\pi,\left\{\p\right\};\Q_p)^{\left\{\p\right\},\epsilon}[\pi]\\
\intertext{and}
C_\infty[\pi]&\cong \MS^{\ast}(\mathfrak{f}_\pi,\emptyset;\Q_p)^{\left\{\p\right\},\epsilon}[\pi].\\
\end{align*}
By construction the global derived Hecke algebra $\tilde{\mathbb{T}}$ acts on all of these modules.
By Lemma \ref{commutative} and Lemma \ref{diagrams} combined with Lemma \ref{app} it fulfills (analogues of) properties (DH\ref{propone}) - (DH\ref{propthree}) and (DH\ref{propfive}).
Therefore, the following theorem follows by applying (variants of) our Main Lemma \ref{ML} and Lemma \ref{reduction}.

\begin{Thm}\label{dastheorem}
If $\MS^{0}(\mathfrak{f}_\pi,\emptyset;\Q_p)^{\epsilon}[\pi]$ generates $\MS^{\ast}(\mathfrak{f}_\pi,\emptyset;\Q_p)^{\epsilon}[\pi]$ as a $\tilde{\mathbb{T}}$-module, then Conjecture \eqref{Co2} (and therefore also Conjecture \eqref{Co1}) holds.
\end{Thm}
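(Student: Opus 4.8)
The plan is to deduce Theorem \ref{dastheorem} from the Main Lemma \ref{ML} by verifying, for the concrete algebra $\tilde{\mathbb{T}} = \varprojlim_n \tilde{\mathbb{T}}^{\ast}_{\Z/p^n} \otimes \Q_p$, that the four properties (DH\ref{propone}) -- (DH\ref{propfour}) hold, where the hypothesis of the theorem is precisely the input needed for the last one. Since the exposition above worked with the slightly streamlined modules $\MS^{\ast}(\mathfrak{f}_\pi,\emptyset;\Q_p)^{\epsilon}$, $\MS^{\ast}(\mathfrak{f}_\pi,\left\{\p\right\};\Q_p)^{\left\{\p\right\},\epsilon}$, etc., while the algebra is really built from the localized, $\ev_n$-saturated modules $A_n, B_n, C_n$, the first task is to match these up: one checks that $A_\infty[\pi]$, $B_\infty[\pi]$, $C_\infty[\pi]$ are canonically the $\pi$-isotypic parts of the modules appearing in the conjectures (this is recorded in the displayed isomorphisms just before the theorem), and that passing to $\pi$-isotypic components is harmless because $\mathbb{T}$ acts centrally by (DH\ref{propone}).

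Concretely I would proceed as follows. First, (DH\ref{propone}): the image of $\mathbb{T}$ sits in degree $0$ of each $\tilde{\mathbb{T}}^{\ast}_{\Z/p^n}$, and Lemma \ref{commutative} shows every generator $h_{x,\alpha}$ commutes with every $T_{\q'}$, so $\mathbb{T}$ lands in the centre; this passes to the limit. Second, (DH\ref{proptwo}): by construction $\tilde{\mathbb{T}}^{\ast}_{\Z/p^n}$ acts on $A_n$, hence on $A_\infty$; the compatible actions of the local derived Hecke algebras on the $B_n$ and $C_n$ (which descend from $B_n'$, $C_n'$ by Lemmas \ref{commutative} and \ref{diagrams}) assemble to $\tilde{\mathbb{T}}$-actions on $B_\infty$ and $C_\infty$, extending the $\mathbb{T}$-action. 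Third, (DH\ref{propthree}): the maps $c^{(\ast)}_\ell(\pi)^{\epsilon}$ are, by Lemma \ref{app}, the inverse limit of the maps $c^{\epsilon}_{\ell_n}$, and Lemma \ref{diagrams}\eqref{diagram1} says each $c^{\epsilon}_{\ell_n}$ intertwines $h_{x,\alpha}$ with $(-1)^i h_{x,\alpha}$; taking limits and inverting $p$ (invoking Proposition \ref{FlachundNoethersch}\eqref{FN2} and Corollary \ref{limits}) gives exactly the degree-$1$ graded module homomorphism property stated in (DH\ref{propthree}). At this point I would also note, via Lemma \ref{diagrams}\eqref{diagram2} and the same limiting argument, that (DH\ref{propfive}) holds: the evaluation map \eqref{Iwahori} is a degree-$0$ graded $\tilde{\mathbb{T}}$-homomorphism. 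Consequently Lemma \ref{reduction} applies and (DH\ref{propfour}) is equivalent to (DH\ref{propfourtwo}'), which is literally the hypothesis of the theorem. Then the Main Lemma \ref{ML} yields Conjecture \eqref{Co2}, and Conjecture \eqref{Co1} follows from it via Proposition \ref{dimensions}\eqref{thirddim}.

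The one genuinely delicate point — and the reason the theorem is phrased with a hypothesis rather than unconditionally — is (DH\ref{propfour}) / (DH\ref{propfourtwo}'): whether the minimal-degree cohomology $\MS^{0}(\mathfrak{f}_\pi,\emptyset;\Q_p)^{\epsilon}[\pi]$ generates the whole graded module over $\tilde{\mathbb{T}}$. This is exactly Venkatesh's conjecture, and it is not something one can extract from the formal properties above; it is assumed. A secondary technical nuisance, which I would handle carefully but not belabour, is the bookkeeping around localization at $\m$ and the saturation $B_n = B_n'/\ker(\ev_n)$: one must check that forming $A_\infty$, $B_\infty$, $C_\infty$ commutes appropriately with passing to $\pi$-isotypic parts and with inverting $p$, so that the Main Lemma's hypotheses — stated for the idealized modules — are faithfully transported to the modules the algebra actually acts on. Granting these, the proof is just the assembly described above.
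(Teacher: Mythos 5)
Your proposal is correct and follows essentially the same route as the paper: the paper's own argument (compressed into the paragraph immediately preceding the theorem) also checks (DH\ref{propone})--(DH\ref{propthree}) and (DH\ref{propfive}) via Lemma \ref{commutative}, Lemma \ref{diagrams}, and Lemma \ref{app}, then invokes Lemma \ref{reduction} and the Main Lemma \ref{ML}. Your added attention to the bookkeeping around the localized and $\ev_n$-saturated modules $A_\infty, B_\infty, C_\infty$ versus the idealized modules in Section \ref{algebra} is exactly the detail the paper flags in Remark \ref{modified} and otherwise leaves implicit.
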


\begin{Rem}
The assumption in the theorem was formulated as a question by Venkatesh in \cite{Ve} for tempered automorphic representations, which are cohomological with respect to the trivial coefficient system, over arbitrary reductive groups.
Note that in \textit{loc.~cit.~}the prime $p$ is always co-prime to the conductor of the automorphic representation.
Under several assumptions, most notably the existence of Galois representations attached to tempered automorphic representations and a big image condition modulo $p$, Venkatesh answers the question affirmatively for split, simply connected, semisimple groups over $\Q$ (see \cite{Ve}, Theorem 7.6).
\end{Rem}

\bibliographystyle{abbrv}
\bibliography{bibfile}

\end{document}